\numberwithin{equation}{section}
\newtheorem{thm}{\textbf{Theorem}}[section]
\newtheorem*{thm*}{Theorem}
\newtheorem{lem}[thm]{\textbf{Lemma}}
\newtheorem*{lem*}{Lemma}
\newtheorem{prop}[thm]{\textbf{Proposition}}
\theoremstyle{remark}
\newtheorem*{rem}{Remark}
\crefname{thm}{Theorem}{Theorems}
\Crefname{thm}{Theorem}{Theorems}
\crefname{lem}{Lemma}{Lemmas}
\Crefname{lem}{Lemma}{Lemmas}
\crefname{prop}{Proposition}{Propositions}
\Crefname{prop}{Proposition}{Propositions}
\crefname{cor}{Corollary}{Corollaries}
\Crefname{cor}{Corollary}{Corollaries}
\newcommand{\res}[1]{\textup{Res}_{#1}\hspace{3pt}}
\newcommand{\alg}{\text{alg}}
\begin{document}

\title{Characters in \textit{p}-adic Heisenberg and Lattice Vertex Operator Algebras}

\author[Daniel Barake]{Daniel Barake}
\author[Cameron Franc]{Cameron Franc}
\email{baraked@mcmaster.ca}
\email{franc@math.mcmaster.ca}

\begin{abstract}
We study characters of states in $p$-adic vertex operator algebras. In particular, we show that the image of the character map for both the $p$-adic Heisenberg and $p$-adic lattice vertex operator algebras contain infinitely-many non-classical $p$-adic modular forms which are not contained in the image of the algebraic character map.
\end{abstract}

\maketitle

\section{Introduction}

We study $p$-adic properties of certain vertex operator algebras. Motivated by both physical and number-theoretical methods, the authors of \cite{FM} introduce the study of $p$-adic VOAs which arise from a completion of the axioms for usual (algebraic) VOAs. Though the existence of $p$-adic variants of known VOAs such as the Virasoro, Monster and Heisenberg was central to this work, the authors demonstrate in Sections 9 and 10 that the character map (i.e. $1$-point correlation function or graded trace) on such VOAs has image in Serre's ring of $p$-adic modular forms $\mathfrak{M}_{p}$. This facet of the theory is at the heart of our current discussion. \par 
\vspace{10pt}
Following \cite{FM}, and denoting by $S_{\alg}$ the Heisenberg algebra, the character map is first extended to a surjective $\mathbb{Q}_{p}$-linear map
\begin{align}
\textbf{\textit{f}}: S_{\alg} &\to \mathbb{Q}_{p} [E_{2},E_{4},E_{6}] \label{padiccharactermap} \\
a &\mapsto \eta(q)Z(a,q) \nonumber
\end{align}
where $q = e^{2 \pi i z}$ and where $E_{2}$, $E_{4}$ and $E_{6}$ denote the Eisenstein series of weights $2$, $4$ and $6$ respectively, normalized such that they have constant term equal to $1$ (cf. \cite{MT,DMN}). Note above that $\textbf{\textit{f}}$ is a rescaling of the character map $Z(a,q)$ which ensures the $\eta (q)^{-1}$ factor is removed from the image. The $p$-adic completion of $\mathbb{Q}[E_{4},E_{6}]$ by the sup-norm yields $\mathfrak{M}_{p}$ (cf. \cite{pSerre}), and so motivated by this construction, states in the $p$-adic Heisenberg VOA $S_{1}$ are formed as $p$-adic limits of convergent sequences of states in $S_{\alg}$. Theorem 9.7 of \cite{FM} then establishes that $\textbf{\textit{f}}$ extends further to a map into $\mathfrak{M}_{p}$, thus obtaining $p$-adic modular forms as characters of states in $S_{1}$ amounts to making the following diagram commute:
\begin{center}
\begin{tikzcd}
S_{1} \arrow{r}{\textbf{\textit{f}}} & \mathfrak{M}_{p}  \\ S_{\alg} \arrow[r, tail, twoheadrightarrow, "\textbf{\textit{f}}"] \arrow[u, tail, hookrightarrow] & \mathbb{Q}_{p} [E_{2},E_{4},E_{6}] \arrow[u, tail, hookrightarrow]
\end{tikzcd}
\end{center}
Here, the upward arrows denote the respective completion functors. Unlike the algebraic case in which it is known that $\textbf{\textit{f}}$ is surjective on the ring of quasi-modular forms, it is yet undetermined whether every $p$-adic modular form arises as the character of some state from a $p$-adic VOA. It was established in Section 10 of \cite{FM} that the image of $\textbf{\textit{f}}$ on $S_{1}$ contains the $p$-adic Eisenstein series of weight $2$ whose $q$-expansion is given by
\begin{align}
G_{2}^{\star}(q) = \frac{p-1}{24} + \sum_{n \geq 1} \sigma^{\star}(n) q^{n} \label{p-adicmodformg2}
\end{align}
where $\sigma^{\star}(n)$ denotes the sum of all divisors of $n$ which are coprime to a prime $p$. It is then natural to speculate whether the entire $p$-adic Eisenstein family lies in the image of the character map on $S_{1}$. Indeed, by considering a broader set of states and extending the methods of \cite{FM}, in \Cref{Section: PadicCharacters} of this discussion we prove: \par 
\begin{thm} \label{Th: theorem1}
The image of the character map on the $p$-adic Heisenberg VOA $S_{1}$ contains infinitely-many non-classical $p$-adic modular forms. In particular, the image contains
\begin{align*}
2^{(t+1)/2}t(t-2)!!G_{2}^{(t-1)/2}(q)G_{l+1}^{\star}(q)
\end{align*}
which is not quasi-modular of level one, and where $t,l \geq 1$ are odd and $l \not\equiv -1 \mod p-1$. 
\end{thm}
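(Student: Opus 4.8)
The plan is to first compute, for odd $t,l\ge 1$, the image under $\textbf{\textit{f}}$ of the explicit algebraic state $a_{-1}^{t}a_{-l}\mathbf 1\in S_{\alg}$, and then to recover the asserted $p$-adic form as a limit of such images. For the first step I would invoke the genus-one Heisenberg $n$-point function formula of \cite{MT,DMN}: the graded trace of a product of oscillators is computed by Wick's theorem, summing over complete pairings of the modes, where a contraction of $a_{-a}$ with $a_{-b}$ contributes a factor proportional to the Eisenstein series $G_{a+b}$ and vanishes unless $a+b$ is even. Applying this to $a_{-1}^{t}a_{-l}\mathbf 1$, the single mode $a_{-l}$ must pair with one of the $t$ copies of $a_{-1}$, producing $G_{l+1}$, while the remaining $t-1$ copies of $a_{-1}$ pair among themselves, each pair producing $G_2$. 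There are $t$ choices for the partner of $a_{-l}$ and $(t-2)!!$ pairings of the rest, all contributing the same monomial $G_2^{(t-1)/2}G_{l+1}$; tracking the factor attached to each of the $(t+1)/2$ contractions and using $\eta(q)Z(\mathbf 1,q)=1$ yields
\begin{align*}
\textbf{\textit{f}}\!\left(a_{-1}^{t}a_{-l}\mathbf 1\right)=2^{(t+1)/2}t(t-2)!!\,G_2^{(t-1)/2}G_{l+1}(q),
\end{align*}
a classical quasi-modular form of weight $t+l$. The parity hypotheses on $t,l$ are exactly what force every contraction to have even weight, so that no term vanishes.

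Second, fix $t$ and let $l$ vary through the sequence $l_j:=l+(p-1)p^{j}$. Since $G_2^{(t-1)/2}$ and the combinatorial constants are independent of $j$, it suffices to understand $G_{l_j+1}$ as $j\to\infty$. By Serre's construction of the $p$-adic Eisenstein family \cite{pSerre}, the coefficients satisfy $\sigma_{l_j}(n)\to\sigma^{\star}_{l}(n)$ $p$-adically, since $d^{l_j}\to d^{l}$ for $p\nmid d$ and $d^{l_j}\to 0$ for $p\mid d$, while the Kummer congruences force the Bernoulli constant terms to converge as well; hence $G_{l_j+1}\to G_{l+1}^{\star}$ in $\mathfrak M_p$. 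The strategy is then to lift this convergence to the VOA: the states $a_{-1}^{t}a_{-l_j}\mathbf 1$ should form a Cauchy sequence in $S_1$ with a limit $w$, and the continuity of $\textbf{\textit{f}}$ established in Theorem~9.7 of \cite{FM} then gives $\textbf{\textit{f}}(w)=\lim_j \textbf{\textit{f}}(a_{-1}^{t}a_{-l_j}\mathbf 1)=2^{(t+1)/2}t(t-2)!!\,G_2^{(t-1)/2}G_{l+1}^{\star}$, placing the asserted form in the image. This is precisely the mechanism by which \cite{FM} realize $G_2^{\star}$, the case $t=l=1$, and I would adapt their Section~10 argument to the general modes $a_{-l_j}$.

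The main obstacle is this convergence step together with the bookkeeping of constants. I must verify against the explicit norm of \cite{FM} that $a_{-1}^{t}a_{-l_j}\mathbf 1$ is genuinely Cauchy as the mode $l_j\to\infty$ in weight space, and, more delicately, that the constants relating the pairing output to $G_{l_j+1}$ — which involve $B_{l_j+1}/(l_j+1)$ — converge $p$-adically rather than diverge. This is where the hypothesis $l\not\equiv-1\pmod{p-1}$ enters: by von Staudt--Clausen it guarantees $(p-1)\nmid(l+1)$, so the limiting constant term of $G_{l+1}^{\star}$ is $p$-integral and nonzero and the limit does not degenerate. The same hypothesis yields non-classicality, since $G_{l+1}^{\star}=G_{l+1}(q)-p^{l}G_{l+1}(q^{p})$ is the $p$-stabilized Eisenstein series, a genuine form for $\Gamma_0(p)$ that is not of level one, and multiplying by the level-one classical factor $G_2^{(t-1)/2}$ cannot restore level one; hence the product lies outside $\mathbb Q_p[E_2,E_4,E_6]$, the image of the algebraic character map. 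Finally, letting $(t,l)$ range over the admissible odd pairs produces forms of distinct weights $t+l$ and distinct $q$-expansions, giving infinitely many such non-classical forms.
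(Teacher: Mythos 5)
Your global strategy --- compute $\textbf{\textit{f}}$ on an explicit algebraic family, then use continuity of $\textbf{\textit{f}}$ (Theorem 9.7 of \cite{FM}) to pass to a $p$-adic limit --- is the same as the paper's, and your treatment of the modular-forms side (the convergence $G_{l_j+1}\to G_{l+1}^{\star}$, the role of $l\not\equiv-1\bmod{p-1}$, non-classicality) is essentially right. The gaps are on the VOA side. First, you conflate the round- and square-bracket formalisms. The pairing formula you invoke (eq.\ (44) of \cite{MT}; \cref{MTeq44} in the paper) computes characters of \emph{square-bracket} states $h[-k_1]\cdots h[-k_n]\mathbf{1}$ only; it fails for round-bracket monomials (already $h[-1]^2\mathbf{1}=h(-1)^2\mathbf{1}-\tfrac{1}{12}\mathbf{1}$, so the two characters differ). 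And even in square brackets, the pair $\lbrace l,1\rbrace$ contributes $\tfrac{2}{(l-1)!}G_{l+1}$, not $2G_{l+1}$: your final formula silently drops a factor of $1/(l-1)!$, i.e.\ what you wrote is the character of the \emph{rescaled} state $(l-1)!\,a[-l]a[-1]^{t}\mathbf{1}$ (compare \Cref{Prop: specificcharactermap}). This is not cosmetic: along $l_j=l+(p-1)p^{j}$ one has $\vert 1/(l_j-1)!\vert_p\to\infty$, so without the $(r-1)!$ normalization neither the characters nor the states can converge $p$-adically.

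Second, and decisively, the convergence step you defer as ``the main obstacle'' is precisely the content of the paper's proof, and for the sequence as you wrote it the step is \emph{false}. Read in round brackets, the states $a_{-1}^{t}a_{-l_j}\mathbf{1}$ are distinct monomials of the very basis in which the sup-norm on $S_{1}$ is defined, so any two of them are at distance $1$ and the sequence is never Cauchy (rescaling by units such as $1-p^{l_j}$ does not help). Read in square brackets, Cauchyness can only be checked after expanding in the round-bracket basis, and that expansion is where the real work lies: the paper's \Cref{Prop: 10.2generalization} writes $(r-1)!\,h[-r]h[-1]^{t}\mathbf{1}$ with integral Stirling coefficients $n!S_{r}^{(n+1)}$ and Bernoulli coefficients $B_{r+1}/(r+1)$, and then \Cref{Lemma: stirlingconvergence} (Euler's theorem) and \Cref{Lemma: bernoulliconvergence} (Kummer congruences, which also require the $(1-p^{r})$ Euler factors) establish the coefficientwise congruences that make the sequence converge in $S_{1}$. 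The correct states are thus $u_{r,t}=(1-p^{r})(r-1)!\,h[-r]h[-1]^{t}\mathbf{1}$ with $r=p^{a}(p-1)+l$; both normalizing factors are forced, and neither appears in your proposal. Your instinct that Kummer enters is correct, but it must be applied to the coefficients of the states themselves, not only to the constant term of the limiting Eisenstein series.
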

The appearance of the factors $G_{2}^{(t-1)/2}$ above is also new and provides some clarity on the image of $\textbf{\textit{f}}$ on $S_{1}$. Note that setting $t=1$ above and normalizing returns the classical $p$-adic Eisenstein series $G_{k}^{\star}$ where $p-1 \nmid k$. \par 
\vspace{10pt}
In a similar direction, the character map on VOAs based on lattice theories $V_{\Lambda}$ (cf. \cite{FLM,LL}) is comparable to that of $S_{\alg}$ albeit with the emergence of theta series (cf. \cite{DMN}), and so it is feasible to suspect that results such as \Cref{Th: theorem1} are not unique to $S_{1}$. From \cite{FM}, one may deduce the existence of $p$-adic variants of $V_{\Lambda}$ formed from the $p$-adic completion with respect to the sup-norm. Such VOAs will be denoted by $\widehat{V}_{\Lambda}$, and we have an analogue of the character map seen in \cref{padiccharactermap}:
\begin{align}
    \textbf{\textit{f}}: V_{\Lambda} &\to \mathbb{Q}_{p} [E_{2},E_{4},E_{6}] \label{padiccharactermaplattice} \\
a &\mapsto \eta(q)^{d}Z(a,q) \nonumber
\end{align}
where $d$ is the rank of the lattice $\Lambda$. It will be clear in our discussion when working over $S_{\alg}$ or $V_{\Lambda}$, and so the use of $\textbf{\textit{f}}$ for both characters should cause no confusion. In this case, concrete examples of $p$-adic characters have yet to be constructed prior to this work. In \Cref{Section: padiccharactersinlatticevoas}, we build infinite families of states in $\widehat{V}_{\Lambda}$ which give rise to $p$-adic modular forms now in the sense of Katz \cite{Katz}. Our second theorem is:
\begin{thm} \label{Th: theorem2}
Let $\Lambda$ be an even unimodular lattice. The image of the character map on the $p$-adic lattice VOA $\widehat{V}_{\Lambda}$ contains infinitely-many non-classical Katz $p$-adic modular forms. In particular, the image contains the sum
\begin{align*}
2tG_{l+1}^{\star}(q)  \sum_{k \geq 0} \binom{t-1}{2k} \frac{(2k)!}{k!} G_{2}(q)^{k} \Theta_{\Lambda, t,k}(q)
\end{align*}
where $t,l \geq 1$ are odd with $l \not\equiv -1 \mod p-1$, and each $\Theta_{\Lambda, t,k}$ is quasi-modular. 
\end{thm}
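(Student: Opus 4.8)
The plan is to transport the computation behind \Cref{Th: theorem1} into the lattice setting, where the graded trace is now taken over the full lattice module $V_\Lambda$, which decomposes as the rank-$d$ oscillator Fock space tensored with the group algebra $\mathbb{C}[\Lambda]$. The essential new feature is that the trace runs over every momentum sector $e^\mu$, $\mu \in \Lambda$, so the zero mode $o(a)$ of an oscillator state may act either by internal Wick contractions of the modes $a_{-n}$ or by picking up momentum eigenvalues through $a_0^{(i)} e^\mu = \langle \alpha_i, \mu\rangle e^\mu$. I would first recall the explicit lattice $1$-point function from \cite{DMN,MT}, which packages exactly this dichotomy: oscillator contractions produce Eisenstein series $G_{2m}(q)$, while the momentum couplings, once summed against $q^{\langle\mu,\mu\rangle/2}$ over $\Lambda$, produce theta series carrying polynomial insertions. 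The even unimodular hypothesis guarantees that these theta series are quasi-modular of level one and that the prefactor $\eta^d$ of \cref{padiccharactermaplattice} cancels correctly.

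Next I would take the lattice analogue of the state used in the Heisenberg theorem, namely (up to normalization) a state of the shape $a_{-l}\,(a_{-1})^{t}\,\mathbf{1}$ built from the oscillators along a fixed direction, and compute $\textbf{\textit{f}}(a) = \eta^d Z(a,q)$ by bookkeeping of the $t+1$ modes. The single high mode $a_{-l}$ must contract against one of the $t$ copies of $a_{-1}$, producing the distinguished factor $G_{l+1}(q)$ and the overall coefficient $2t$. Of the remaining $t-1$ copies of $a_{-1}$, a subset of size $2k$ may contract internally, contributing $G_2(q)^k$ together with the selection-and-matching count $\binom{t-1}{2k}\tfrac{(2k)!}{k!} = 2^k \binom{t-1}{2k}(2k-1)!!$, while the complementary $t-1-2k$ modes couple to the momentum and assemble into the polynomial theta series $\Theta_{\Lambda,t,k}(q)$. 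Summing over $k$ yields the claimed expression with $G_{l+1}$ in place of $G_{l+1}^\star$. As a consistency check, the top term $k = (t-1)/2$ has coefficient $2t\cdot\tfrac{(t-1)!}{((t-1)/2)!} = 2^{(t+1)/2}t(t-2)!!$, which, upon replacing $\Theta_{\Lambda,t,(t-1)/2}$ by the vacuum contribution, recovers the Heisenberg formula of \Cref{Th: theorem1}.

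I would then pass to the $p$-adic limit along Serre's family of weights: choosing exponents $l_n$ with $l_n + 1 \equiv l+1 \pmod{p-1}$ and $l_n \to \infty$ forces the $p$-adic convergence $G_{l_n+1} \to G_{l+1}^\star$, the hypothesis $l \not\equiv -1 \pmod{p-1}$ being exactly what keeps the limiting weight nontrivial. Since the coefficients $2t\binom{t-1}{2k}\tfrac{(2k)!}{k!}$ and the factors $G_2^k\Theta_{\Lambda,t,k}$ do not depend on $l$, only the distinguished Eisenstein factor moves, so the full character converges and the associated sequence of states is Cauchy in the sup-norm. The lattice analogue of Theorem 9.7 of \cite{FM}, encoded by the completion diagram, then places the limiting state genuinely inside $\widehat{V}_\Lambda$ with the asserted character; and since each $G_2^k\Theta_{\Lambda,t,k}$ is classical quasi-modular while $G_{l+1}^\star$ is non-classical for $l \not\equiv -1 \pmod{p-1}$, the product is a non-classical Katz $p$-adic modular form in the sense of \cite{Katz}.

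I expect the main obstacle to be the second step: pinning down the precise combinatorial identity that reorganizes the zero-mode expansion into the stated sum, and confirming that each momentum polynomial of degree $t-1-2k$ genuinely produces a \emph{quasi-modular} theta series of level one rather than a merely formal $q$-series. Controlling how the $G_2$-contractions interact with the non-harmonic momentum polynomials — and hence the emergence of quasi-modularity rather than modularity in $\Theta_{\Lambda,t,k}$ — is where the even unimodular structure of $\Lambda$ must be used in full, and where the delicate interplay between the oscillator and lattice sectors is concentrated.
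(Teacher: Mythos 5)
Your character computation is essentially the paper's own: your Wick-contraction bookkeeping is the informal version of what the paper does rigorously via Zhu's recursion \cref{zhurecursion} and the inductive \Cref{Lemma: DMN4.2.2} (an extension of Lemma 4.2.2 of \cite{DMN}), and it arrives at exactly \Cref{Prop: specificcharactermap2}, including the correct observation that the high mode cannot couple to momentum and the correct coefficient $2t\binom{t-1}{2k}\frac{(2k)!}{k!}$. The gap is in your third step. You write that since the coefficients and the factors $G_{2}^{k}\Theta_{\Lambda,t,k}$ do not depend on $l$, ``the full character converges and the associated sequence of states is Cauchy in the sup-norm.'' This inference runs in the wrong direction: convergence of the characters does not imply convergence of the states (the character map is far from injective, and continuity of $\textbf{\textit{f}}$ only transfers convergence \emph{from} states \emph{to} characters). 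Without an independent proof that the sequence of states converges in $\widehat{V}_{\Lambda}$, there is no limiting state whose character you can identify, and the appeal to the analogue of Theorem 9.7 of \cite{FM} has nothing to apply to.

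Establishing that state convergence is the technical heart of the paper's proof, and it is entirely absent from your proposal. The sup-norm on $\widehat{V}_{\Lambda}$ is defined with respect to the integral round-bracket basis, so one must first expand the square-bracket states $(1-p^{r})(r-1)!\,a[-r]a[-1]^{t}\textbf{1}$ in round brackets (\Cref{Prop: roundbracketlatticestates}, which uses that these states lie in $S_{\alg}\otimes e^{0}$, where $a(0)$ acts as zero, to reduce to the Heisenberg computation of \Cref{Prop: 10.2generalization}). The coefficients in that expansion involve $n!S_{r}^{(n+1)}$ and $B_{r+1}/(r+1)$, both of which vary with $r = p^{b}(p-1)+l$; their $p$-adic convergence as $b \to \infty$ is exactly the content of \Cref{Lemma: stirlingconvergence} (via Euler's theorem) and \Cref{Lemma: bernoulliconvergence} (via Kummer's congruences). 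This is also where the hypothesis $l \not\equiv -1 \bmod p-1$ is genuinely used --- it is what makes Kummer's congruences applicable to the Bernoulli terms in the \emph{states}, not merely a device to ``keep the limiting weight nontrivial'' --- and it explains the normalization factors $(1-p^{r})(r-1)!$, which you wave off as ``up to normalization'' but which are chosen precisely so that these congruences hold. To repair your argument you would need to import this entire convergence analysis; once that is done, the rest of your outline matches the paper.
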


The precise definition of $\Theta_{\Lambda, t,k}$ is given in the statement of \Cref{Prop: specificcharactermap2}. In this case, by setting $t=1$ and normalizing, we obtain the product $G_{k}^{\star}\Theta_{\Lambda}$ where $\Theta_{\Lambda}$ is the theta series for $\Lambda$ which is well-known to be modular since $\Lambda$ is even and unimodular. \par 
\vspace{10pt}
Both \Cref{Th: theorem1,Th: theorem2} demonstrate the need for an ameliorated theory better suited to the study of the $p$-adic character. The methods currently employed are analogous to those which were used by Serre in \cite{pSerre} and generalized in a geometric way by Katz in \cite{Katz}. Hence a geometric reformulation of the axioms for p-adic VOAs in view of Katz' theory of $p$-adic modular forms could serve as such a device.   \par 
\vspace{10pt}
Another possibility would be to develop a theory of Hecke operators arising from $S_{\alg}$ or $V_{\Lambda}$. For example, a realization of the operator $U_{p}$ as in \cite{pSerre} directly within a $p$-adic VOA could prove useful in assessing the surjectivity of the $p$-adic character on $\mathfrak{M}_{p}$. Though this is only hypothetical, the authors of \cite{Katsura}, \cite{Katsura2}, and \cite{Katsura3} have shown that such operators do arise algebraically in the context of conformal field theory, namely out of the Fock space of the Fermionic algebra. It is then an open question whether similar structures exist in $S_{\alg}$, $V_{\Lambda}$, or in any of their various incarnations. \par

\section{Background}

\subsection{Vertex Operator Algebras} \label{Subsection: VOAs}

For elementary notions in VOA theory, the reader is invited to consult \cite{LL}. A vertex operator algebra consists of a $\mathbb{Z}$-graded vector space, or Fock space,
\begin{align*}
V = \coprod_{n \in \mathbb{Z}} V_{(n)}
\end{align*}
where $\dim V_{(n)} < \infty$ for all $n$ and $V_{(n)} = 0$ for $n \ll 0$. We say the state $a \in V$ is homogeneous of weight $k$ if $a \in V_{(k)}$. The vertex operator linear map
\begin{align*}
Y: V &\to \text{End}(V)[[z,z^{-1}]] \\
a &\mapsto Y(a,z) = \sum_{n \in \mathbb{Z}} a(n)z^{-n-1}
\end{align*}
packages the modes $a(n) \in \text{End}(V)$ of the state $a$ into a formal Laurent series, and these satisfy the truncation condition $a(n)b = 0$ for $n \gg 0$ for $a,b \in V$. The vacuum state $\textbf{1} \in V_{(0)}$ satisfies $Y(\textbf{1},z) = 1$ and $\lim_{z \to 0} Y(a,z)\textbf{1} = a$. These are the vacuum and creation properties, respectively. The conformal state $\omega \in V_{(2)}$ has vertex operator given by
\begin{align*}
Y(\omega, z) = \sum_{n \in \mathbb{Z}} L(n) z^{-n-2} \hspace{5pt} \left( = \sum_{n \in \mathbb{Z}} \omega(n) z^{-n-1} \right)
\end{align*}
where the $L(i) \in \text{End}(V)$ generate a copy of the Virasoro algebra with central charge $c_{V} \in \mathbb{C}$. The $L(0)$-eigenspace decomposition of $V$ coincides with the grading of $V$, that is, $L(0)a=ka$ where $a \in V_{(k)}$, and the $L(-1)$-derivative property holds, i.e. $L(-1)\textbf{1}=0$. Finally, the Jacobi identity is satisfied for any $a,b \in V$ and triple of integers $(r,s,t)$:
\begin{align}
\sum_{i \geq 0} \binom{r}{i} (-1)^{i} &a ( s+r-i ) b (t+i) - (-1)^{r} \sum_{i \geq 0} \binom{r}{i} (-1)^{i} b (t+r-i) a (s+i) \label{jacobi} \\
&= \sum_{i \geq 0} \binom{s}{i} (a(r+i)b)(s+t-i). \nonumber
\end{align}
We shall also require the notion of a $V$-module $M$ in arguments involving lattice VOAs. Recall this is a $\mathbb{C}$-graded vector space
\begin{align*}
M = \coprod_{h \in \mathbb{C}} M_{(h)}
\end{align*}
such that $\dim M_{(h)} < \infty$ for all $h$ and $M_{(h)} = 0$ for $h \ll 0$. Here also, the $L(0)$-eigenspace decomposition coincides with the grading above. We call $a \in M$ homogeneous of weight $h$ if $a \in M_{(h)}$. We have an analogous vertex operator linear map
\begin{align*}
Y_{M}: V &\to \text{End}(M)[[z,z^{-1}]] \\
a &\mapsto Y_{M}(a,z) = \sum_{n \in \mathbb{Z}}a(n)z^{-n-1}
\end{align*}
where the truncation and vacuum conditions as well as the Jacobi identity (\cref{jacobi}) hold true for the above modes. \par 
\vspace{10pt}
Given $a \in V_{(k)}$ and $m \in \mathbb{Z}$, one shows that $a(n): V_{(m)} \to V_{(m+k-n-1)}$ (cf. \cite{MT}). Denote then the zero mode $o(a)$ of $a$ as $a(k-1)$ such that $o(a): V_{(m)} \to V_{(m)}$, and extend the definition to all of $V$ additively. By tracing the zero mode over each subspace $V_{(n)}$ (recall $\dim V_{(n)} < \infty$ for all $n \in \mathbb{Z}$) and packaging these into a generating function, we obtain the character
\begin{align*}
Z(a,q) = q^{-c_{V}/24} \sum_{n \in \mathbb{Z}} \text{Tr}_{V_{(n)}}o(a)q^{n}.
\end{align*}
In the case of $V$-modules, the character is defined in a natural way where the zero mode $o(a) = a(k-1) \in \text{End}(M)$ for $a \in V_{(k)}$ preserves each homogeneous space $M_{(h)}$. \par 

\subsection{\textit{p}-adic Vertex Operator Algebras}

The definition of a VOA given above can be easily generalized over any field $k$ of characteristic zero. In particular, it is perfectly reasonable to set $k = \mathbb{Q}_{p}$. All of the VOAs of interest in our discussion (Heisenberg modules and lattice VOAs) have bases defined over the $p$-adic integers $\mathbb{Z}_p$, such that the corresponding modes expressed in these bases are all $p$-adically integral, satisfying some mild hypotheses on boundedness of these operators. A $p$-adic VOA $\widehat{V}$ is then formed by completing and inverting $p$. One description is to first consider the $\mathbb{Z}_p$-module
\begin{align}
    \widehat{U} = \lim_{\substack{ \leftarrow \\ m }} U / p^{m} U
\end{align}
and then letting $\widehat{V} = \widehat{U}\otimes_{\mathbb{Z}_p}\mathbb{Q}_p$. Equivalently, $\widehat{V}$ can be described in terms of a completion with respect to a sup-norm arising from the integral basis in $U$. See Section 7 of \cite{FM} for details on these constructions. For our purposes, it will be enough to think of elements of $\widehat{V}$ as limits of convergent sequences of states in an underlying algebraic VOA, though the full axiomatic description of $\widehat{V}$ is given in \cite{FM}. \par 
\vspace{10pt}
For a concrete example, setting $V$ to be the Heisenberg algebra (which we will describe momentarily below as polynomial ring in countably many indeterminates $h(i)$) over $\mathbb{Q}_{p}$, let $S_{r}$ for $r \in \mathbb{R}_{> 0}$ denote the completion of $V$ with respect to the sup-norm
\begin{align}
    \left\lvert \sum_{I} a_{I}h^{I} \right\rvert_{r} = \sup_{I} \vert a_{I} \vert r^{I}
\end{align}
where $I$ is a finite multi-subset of $\mathbb{Z}_{< 0}$ with $\vert I \vert = -\sum_{i \in I} i$, and $h^{I} = \prod_{i \in I} h(i)$. Then Proposition 9.1 in \cite{FM} shows that elements of $S_{r}$ are of the form $\sum_{I} a_{I}h^{I}$ where
\begin{align}
    \lim_{\vert I \vert \to \infty} \vert a_{I} \vert r^{\vert I \vert} = 0.
\end{align}
In particular, with $r=1$, the ring $S_{1}$ has the structure of a $p$-adic VOA, and there is a nested chain of subpaces $S_{1} \supseteq S_{2} \supseteq S_{3} \supseteq \cdots $. \par

\subsection{\textit{p}-adic modular forms}

Following \cite{pSerre}, for a prime $p \geq 2$ we denote the $p$-adic weight space as
\begin{align*}
    X = \lim_{\substack{ \leftarrow \\ m }} \mathbb{Z} / p^{m} (p-1) \mathbb{Z} \cong \mathbb{Z}_{p} \times \mathbb{Z} / (p-1) \mathbb{Z}.
\end{align*}
Note that when $p = 2$, we have $X \cong \mathbb{Z}_{2}$. We say $k \in X$ is even if $k \in 2X$, and that $k$ is odd if it is not even. A $p$-adic modular form is then a formal series
\begin{align*}
    f = \sum_{n=0}^{\infty} a_{n} q^{n} \in \mathbb{Q}_{p}[[z]]
\end{align*}
such that $f$ is the $p$-adic limit of modular forms $f_{i}$ of weight $k_{i}$ having rational coefficients. The ring $\mathfrak{M}_{p}$ is then defined as the completion of $\mathbb{Q}_{p}[E_{4},E_{6}]$ by the sup-norm on the Fourier coefficients $a_{n}$, and $f \in \mathfrak{M}_{p}$ has weight equal to the $p$-adic limit of the $k_{i}$, which is an element of $X$. \par 
\vspace{10pt}
The ring $\mathfrak{M}_{p}$ contains $p$-adic analogues of classical Eisenstein series, and these take the form
\begin{align*}
    G_{k}^{\star}(q) = \frac{1}{2} \zeta_{p} (1-k) + \sum_{n=1}^{\infty} \sigma_{k-1}^{\star} (n) q^{n}
\end{align*}
where $\sigma_{k-1}^{\star}(n)$ is defined as in \cref{p-adicmodformg2} and $\zeta_{p}$ denotes the Kubota-Leopoldt $p$-adic zeta function (cf. \cite{KL}). For even integers $k \geq 2$, $G_{k}^{\star}$ is a modular form on the congruence subgroup $\Gamma_{0}(p)$. For example, taking the sequence $\lbrace G_{2+p^{m}(p-1)} \rbrace_{m \geq 0}$ of classical Eisenstein series as $m \to \infty$ for some $p \geq 2$ yields $G_{2}^{\star} \in \mathfrak{M}_{p}$ seen in \cref{p-adicmodformg2}. The images in \Cref{Th: theorem1,Th: theorem2} will be obtained in a similar manner.

\section{p-adic characters in the Heisenberg VOA} \label{Section: PadicCharacters}

\subsection{The Heisenberg VOA and the square-bracket formalism} \label{Subsection: HeisenbergandSquareBracket}

We briefly outline the construction of the Heisenberg VOA $S_{\alg}$. A rigorous treatment can be found in many texts such as \cite{FLM,LL,MT,FB}. For generalities on Lie algebras alongside their representations, we follow Chapter 9 of \cite{Carter}. Let $\mathfrak{h}$ be a finite-dimensional vector space viewed as an abelian Lie algebra. Then associated the (untwisted) affine Lie algebra
\begin{align*}
\widehat{\mathfrak{h}} = \left( \mathfrak{h} \otimes \mathbb{C}[t,t^{-1}] \right) \oplus \mathbb{C}\textbf{k}
\end{align*}
where $\textbf{k}$ is central. There are two graded subalgebras which we write with respect to the notation given in \cite{LL} (by weights) as:
\begin{align*}
&\widehat{\mathfrak{h}}_{+} = \mathfrak{h} \otimes t^{-1}\mathbb{C}[t^{-1}] \\
&\widehat{\mathfrak{h}}_{-} = \mathfrak{h} \otimes t \mathbb{C}[t].
\end{align*}
Define $\widehat{\mathfrak{h}}_{\leq 0} = \widehat{\mathfrak{h}}_{-} \oplus \left( \mathfrak{h} \oplus \textbf{k} \right)$, and denote by $\mathcal{U}( \cdot )$ the universal enveloping algebra. Let $\mathbb{C}\textbf{1}$ be the $1$-dimensional $\widehat{\mathfrak{h}}_{\leq 0}$-module where $\widehat{\mathfrak{h}}_{-}$ acts trivially on $\mathbb{C}$ and $\textbf{k}$ acts as the identity. Construct the induced module
\begin{align*}
S_{\alg} = \text{Ind}_{\mathcal{U}(\widehat{\mathfrak{h}}_{\leq 0})}^{\mathcal{U}(\widehat{\mathfrak{h}})} \mathbb{C}\textbf{1} = \mathcal{U}(\widehat{\mathfrak{h}}) \otimes_{\mathcal{U}(\widehat{\mathfrak{h}}_{\leq 0})} \mathbb{C}\textbf{1}
\end{align*}
where we write the action of $h \otimes t^{n}$ on $S_{\alg}$ as $h(n)$ for $n \in \mathbb{Z}$. Note that $S_{\alg} \cong S(\widehat{\mathfrak{h}}_{+})$, the symmetric algebra. That is, $\lbrace h(-1), h(-2), h(-3), \ldots \rbrace$ form a basis for $S_{\alg}$. Also note the $h(n)$ for $n \geq 1$ act on $S_{\alg}$ as $n \partial_{h(-n)}$, and $h(0)$ acts as zero. The Heisenberg vertex operator (or Heisenberg field) is defined as
\begin{align*}
h(z) = \sum_{n \in \mathbb{Z}} h(n)z^{-n-1} \in \text{End}(S_{\alg})[[z,z^{-1}]].
\end{align*}
This is no more than a generating function for the operators $h(n) \in \text{End}(S_{\alg})$. Hence we may view $S_{\alg}$ as being generated by the Fourier coefficients of the state $h(-1)\textbf{1}$ (see \cite{FB}). \par 
\vspace{10pt}
Instrumental to establishing the connection between VOAs and modular forms is the notion of transforming a VOA by way of a map $z \mapsto \phi (z)$, introduced first in \cite{Zhu}. An overview of this technique can be found in detail in \cite{MT} and in Chapter 5 of \cite{FB}. In particular, setting $\phi (z) = e^{z} - 1$ formulates the vertex operators as living on a torus rather than a sphere, and for $S_{\alg}$ these are given by
\begin{align}
Y(h(-1)\textbf{1}, e^{z}-1) e^{z} = \sum_{n \in \mathbb{Z}} h[n]z^{-n-1}. \label{transformedheisenberg}
\end{align}
This resulting ``square-bracket" VOA is equipped with a new grading which respects the grading on the space of quasi-modular forms. Multiplying \cref{transformedheisenberg} by $z^{n}$ we obtain
\begin{align}
h[n] &= \res{z}Y(h(-1)\textbf{1}, e^{z} -1) (e^{z})(z^{n}). \label{generalsquarebracketstate}
\end{align}
Note that $Y(h(-1)\textbf{1}, e^{z}-1)$ truncates when applied to any state in $S_{\alg}$, and that we can use a change of variables theorem (cf. eq. (1.1.3) of \cite{Zhu}) with $w = e^{z}-1$ to show
\begin{align}
h[n] = \res{w} h(w) \left( \log (w+1) \right)^{n} \label{squarebracketgeneral}
\end{align}
Observe that $h[0]=h(0)$. For our purposes, we will only require the explicit expression where $n=-1$ above. If we let $c_{0}=1$, $c_{i}=0$ for $i < 0$, and
\begin{align*}
c_{k} = \sum_{i \geq 1} \frac{(-1)^{i+1}}{i+1} c_{k-i}
\end{align*}
so that
\begin{align*}
c_{1} = \frac{1}{2}, \hspace{5pt} c_{2} = -\frac{1}{12}, \hspace{5pt} c_{3} = \frac{1}{24}, \hspace{5pt} c_{4} = -\frac{19}{720}, \hspace{5pt} \cdots
\end{align*}
then $c_{k} = \text{Coeff}_{w^{k-1}} ( \log (w+1) )^{-1}$ and we see that
\begin{align}
    h[-1] = h(-1) + \frac{1}{2} h(0) - \frac{1}{12} h(1) + \frac{1}{24} h(2) - \frac{19}{720} h(3) + \cdots  \label{squarebracketh-1}
\end{align}

\subsection{Some square-bracket states} \label{Subsection: somesquarebracketstates}

We construct as in \cite{FM} a family of square-bracket states in $S_{\alg}$ which map under $\textbf{\textit{f}}$ (recall \cref{padiccharactermap}) to an appropriate subspace of the ring of quasi-modular forms. In order to assess $p$-adic convergence in the subsequent \Cref{Lemma: stirlingconvergence,Lemma: bernoulliconvergence}, we must write these square-bracket states in the round-bracket formalism, that is, in terms of the basis $\lbrace h(-1),h(-2),h(-3) \ldots \rbrace$ of $S_{\alg}$. \par 
\begin{lem}\label{Lemma: correctionlemma1}
For an odd integer $t \geq 1$, we have
\begin{align}
h[-1]^{t}\textbf{1} = \sum_{k \geq 0} \binom{t}{2k} \frac{(2k)!}{k!(-24)^{k}}h(-1)^{t-2k}\textbf{1}. \label{correctionlemm1eq}
\end{align}
\end{lem}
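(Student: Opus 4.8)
The plan is to reduce this to the classical Hermite-polynomial computation by separating $h[-1]$ into its creation and annihilation parts. Using \cref{squarebracketh-1}, write $h[-1] = h(-1) + \tfrac{1}{2}h(0) + A_{-}$ where $A_{-} = \sum_{n \geq 1} c_{n+1}h(n)$ collects the modes $h(n)$ with $n \geq 1$. Since $h(0)$ acts as zero on $S_{\alg}$ and each $h(n)$ with $n \geq 1$ acts as $n\partial_{h(-n)}$, the operator $A_{-}$ annihilates the vacuum, $A_{-}\textbf{1} = 0$, and $h[-1]$ acts on every state as $h(-1) + A_{-}$. The crucial observation is that the commutator $[A_{-}, h(-1)]$ collapses to a single scalar: using $[h(m),h(n)] = m\,\delta_{m+n,0}$, only the $n=1$ summand of $A_{-}$ survives, so $[A_{-}, h(-1)] = c_{2} = -\tfrac{1}{12}$.

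With this scalar in hand, I would prove by induction on $t$ that $h[-1]^{t}\textbf{1} = P_{t}(h(-1))\textbf{1}$ for a one-variable polynomial $P_{t}$. Because $[A_{-},h(-1)] = -\tfrac{1}{12}$ is central, the derivation identity $[A_{-},h(-1)^{j}] = -\tfrac{j}{12}\,h(-1)^{j-1}$ holds, and applying $A_{-}$ to $P_{t}(h(-1))\textbf{1}$ together with $A_{-}\textbf{1} = 0$ gives $A_{-}P_{t}(h(-1))\textbf{1} = -\tfrac{1}{12}P_{t}'(h(-1))\textbf{1}$. Hence $h[-1]$ acts on $\mathbb{Q}_{p}[h(-1)]\textbf{1}$ exactly as the operator $x - \tfrac{1}{12}\tfrac{d}{dx}$ acts on $\mathbb{Q}_{p}[x]$ (with $x$ standing for multiplication by $h(-1)$), and the $P_{t}$ satisfy the recursion $P_{t+1} = \left(x - \tfrac{1}{12}\tfrac{d}{dx}\right)P_{t}$ with $P_{0} = 1$.

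Solving the recursion is the routine part: the $P_{t}$ are rescaled Hermite polynomials, and a short induction on $t$ using the Pascal-type identity $\binom{t}{2k-2}(t-2k+2) = \binom{t}{2k-1}(2k-1)$ yields $P_{t}(x) = \sum_{k \geq 0}\binom{t}{2k}(2k-1)!!\,\left(-\tfrac{1}{12}\right)^{k}x^{t-2k}$. Substituting $(2k-1)!! = (2k)!/(2^{k}k!)$ and $\tfrac{1}{2^{k}}\left(-\tfrac{1}{12}\right)^{k} = (-24)^{-k}$ converts this into exactly \cref{correctionlemm1eq}. I expect the only genuine obstacle to be the commutator computation of the first paragraph—verifying that the infinitely many higher annihilation modes contribute nothing and that the survivor is the central scalar $-\tfrac{1}{12}$; once that is secured, the remainder is the standard oscillator computation and bookkeeping. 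Note that the argument never uses that $t$ is odd, oddness being relevant only for the later modular interpretation.
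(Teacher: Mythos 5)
Your proof is correct and follows essentially the same route as the paper's: the paper also inducts on $t$, applying $h[-1]$ to the claimed expansion, where only the $h(-1)$ and $-\tfrac{1}{12}h(1)$ modes act nontrivially on states polynomial in $h(-1)$ (exactly the content of your commutator observation), and then closes the induction with Pascal's rule. Your Hermite-polynomial packaging of the recursion $P_{t+1} = \bigl(x - \tfrac{1}{12}\tfrac{d}{dx}\bigr)P_{t}$ is precisely the identification the paper makes in the unnumbered remark immediately following the lemma, where $h[-1]$ acting on $\textbf{1}$ is matched with $\mathrm{He}_{n}^{\alpha}(x) = (x-\alpha\partial_{x})^{n}\cdot 1$ at $\alpha = 1/12$, so your argument and the paper's are the same in substance, differing only in how explicitly the operator identity is isolated.
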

\begin{proof}
We proceed by induction. Recall that in $S_{\alg}$, \cref{squarebracketh-1} shows that $h[-1]\textbf{1} = h(-1)\textbf{1}$ which agrees with the statement. Suppose the claim holds up to some $t>1$. Then applying $h[-1]$ to the right-hand side of \cref{correctionlemm1eq} gives
\begin{align*}
&\sum_{k \geq 0} \binom{t}{2k} \frac{(2k)!}{k!(-24)^{k}}h(-1)^{t-2k+1}\textbf{1} + \sum_{k \geq 0} \binom{t}{2k} \frac{2(2k)!(t-2k)}{k!(-24)^{k+1}} h(-1)^{t-2k-1}\textbf{1} \\
&= \sum_{k \geq 0} \binom{t}{2k} \frac{(2k)!}{k!(-24)^{k}}h(-1)^{t-2k+1}\textbf{1} + \sum_{k \geq 0} \binom{t}{2k+1} \frac{(2k+2)!}{(k+1)!(-24)^{k+1}} h(-1)^{t-2k-1}\textbf{1}. \\
\intertext{Re-indexing the second sum, we get}
&= h(-1)^{t+1} + \sum_{k \geq 1} \binom{t}{2k} \frac{(2k)!}{k!(-24)^{k}}h(-1)^{t-2k+1}\textbf{1} + \sum_{k \geq 1} \binom{t}{2k-1} \frac{(2k)!}{k!(-24)^{k}} h(-1)^{t-2k+1}\textbf{1},
\end{align*}
and by using Pascal's rule this equals $h[-1]^{t+1}\textbf{1}$ which completes the induction.
\end{proof}
\begin{rem}
It is interesting to note that the expression given in \Cref{Lemma: correctionlemma1} is related to generalized Hermite polynomials
\begin{align}
\text{He}_{n}^{\alpha}(x) = \left( x - \alpha \partial_{x} \right)^{n} \cdot 1 = e^{-\frac{\alpha (\partial_{x})^{2}}{2}}x^{n} \label{generalizedhermite}
\end{align}
for $n \geq 0$ and where $\alpha > 0$ (cf. \cite{Hermite2,Hermite1}). Expanding the rightmost side of \cref{generalizedhermite}:
\begin{align*}
\text{He}_{n}^{\alpha}(x) = \sum_{k \geq 0} \frac{(-\alpha)^{k}}{2^{k}(k)!} \partial_{x}^{2k}x^{n} = \sum_{k \geq 0} \frac{(n)!(-\alpha)^{k}}{2^{k}(k)!(n-2k)!}x^{n-2k} = \sum_{k \geq 0} \binom{n}{2k} \frac{(-\alpha)^{k}(2k)!}{2^{k}(k)!} x^{n-2k}.
\end{align*}
By comparing \cref{generalizedhermite} to repeated application of $h[-1]$ on $\textbf{1}$, setting $x=h(-1)$, $\alpha = 1/12$ and $1 = \textbf{1}$ above gives the expression of \Cref{Lemma: correctionlemma1}. Expressions of the form $h[-r]^{t} \textbf{1}$ for $r \geq 2$ in the round-bracket formalism are much more difficult to derive, however would prove extremely useful in computing further $p$-adic characters.
\end{rem}
\begin{lem}\label{Lemma: correctionlemma2}
For an odd integer $t \geq 1$, we have
\begin{align*}
h(1)h[-1]^{t}\textbf{1} = \sum_{k \geq 0} \binom{t}{2k+1} \frac{(2k+1)!}{k!(-24)^{k}} h(-1)^{t-2k-1}\textbf{1}.
\end{align*}
\end{lem}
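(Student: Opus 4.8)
The plan is to reduce the statement to the previous lemma by computing the action of the mode $h(1)$ on the explicit round-bracket expansion furnished by \Cref{Lemma: correctionlemma1}. Recall that $h(1)$ acts on $S_{\alg}$ as the differential operator $\partial_{h(-1)}$ (more precisely $h(1)$ acts as $1 \cdot \partial_{h(-1)}$, since $h(n)$ for $n \geq 1$ acts as $n\partial_{h(-n)}$), and that $h(1)$ annihilates the vacuum. First I would substitute the result of \Cref{Lemma: correctionlemma1}, namely
\begin{align*}
h[-1]^{t}\textbf{1} = \sum_{k \geq 0} \binom{t}{2k} \frac{(2k)!}{k!(-24)^{k}}h(-1)^{t-2k}\textbf{1},
\end{align*}
and then apply $\partial_{h(-1)}$ term-by-term. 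Since $\partial_{h(-1)} h(-1)^{t-2k} = (t-2k)h(-1)^{t-2k-1}$, this immediately produces
\begin{align*}
h(1)h[-1]^{t}\textbf{1} = \sum_{k \geq 0} \binom{t}{2k} \frac{(2k)!(t-2k)}{k!(-24)^{k}}h(-1)^{t-2k-1}\textbf{1}.
\end{align*}

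The remaining work is the purely combinatorial identity
\begin{align*}
\binom{t}{2k}(2k)!(t-2k) = \binom{t}{2k+1}(2k+1)!,
\end{align*}
which matches the claimed coefficient after the factorials are collected. I would verify this by expanding both sides in terms of $t!$: the left side is $\frac{t!}{(2k)!(t-2k)!}(2k)!(t-2k) = \frac{t!}{(t-2k-1)!}$, and the right side is $\frac{t!}{(2k+1)!(t-2k-1)!}(2k+1)! = \frac{t!}{(t-2k-1)!}$, so the two agree. This confirms that the coefficient of $h(-1)^{t-2k-1}\textbf{1}$ is exactly $\binom{t}{2k+1}\frac{(2k+1)!}{k!(-24)^{k}}$, as desired.

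One point deserving care is the range of summation and the top term. When $t$ is odd, the largest value of $k$ for which $h(-1)^{t-2k}$ is a nonconstant power is $k=(t-1)/2$, where $t-2k=1$; the factor $(t-2k)$ is then $1$ and the term survives, while for the putative next index the power $t-2k$ would be negative and the binomial $\binom{t}{2k}$ vanishes, so no spurious terms appear and differentiation does not create a constant surviving $h(1)$. I do not anticipate a genuine obstacle here: the lemma is essentially an application of the creation/annihilation action of $h(1)$ to \Cref{Lemma: correctionlemma1} followed by a one-line factorial rearrangement. The only mild subtlety is bookkeeping the index shift so that the differentiated power $h(-1)^{t-2k-1}$ is correctly paired with the binomial $\binom{t}{2k+1}$, but this is exactly what the factorial identity above accomplishes.
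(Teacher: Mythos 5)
Your proof is correct and is essentially the paper's own argument: the paper disposes of this lemma by remarking that it is ``similar to the computation of the second sum'' in \Cref{Lemma: correctionlemma1}, and that computation is precisely what you carry out—applying $h(1)=\partial_{h(-1)}$ term-by-term to the expansion of $h[-1]^{t}\textbf{1}$ from \Cref{Lemma: correctionlemma1} and recombining factorials via $\binom{t}{2k}(2k)!\,(t-2k)=\binom{t}{2k+1}(2k+1)!$.
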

\begin{proof}
The proof is similar to the computation of the second sum in \Cref{Lemma: correctionlemma1}.
\end{proof}
The following proposition is a slight generalization of Lemma 10.2 of \cite{FM} which outlines the family of square-bracket states which we will be considering throughout. \par 
\begin{prop}\label{Prop: 10.2generalization}
For odd integers $r,t \geq 1$, we have
\begin{align*}
(r-1)!h[-r]h[-1]^{t}\textbf{1} = \sum_{n \geq 0}\sum_{k \geq 0} &\binom{t}{2k} \frac{n!S_{r}^{(n+1)}(2k)!}{k!(-24)^{k}} h(-n-1)h(-1)^{t-2k}\textbf{1} \\
&- \sum_{k \geq 0} \binom{t}{2k+1} \frac{B_{r+1}(2k+1)!}{k!(r+1)(-24)^{k}} h(-1)^{t-2k-1}\textbf{1}
\end{align*}
where
\begin{align*}
S_{r}^{(n+1)} = \frac{1}{n!} \sum_{j \geq 0} \binom{n}{j} (-1)^{n+j} (j+1)^{r-1}
\end{align*}
are the Stirling numbers of the second kind.
\end{prop}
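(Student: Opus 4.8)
The plan is to expand $h[-r]$ into round-bracket modes, apply the result to the round-bracket expression for $h[-1]^{t}\mathbf{1}$ furnished by \Cref{Lemma: correctionlemma1}, and then identify the resulting coefficients with Stirling and Bernoulli numbers through a residue computation. Starting from \cref{squarebracketgeneral} together with $h(w)=\sum_{m}h(m)w^{-m-1}$, I would write
\[
h[-r]=\sum_{m}\beta_m^{(r)}h(m),\qquad \beta_m^{(r)}=\coeff{m}(\log(w+1))^{-r},
\]
so that the problem reduces to computing the coefficients $\beta_m^{(r)}$ and to tracking which modes act nontrivially.

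Next I would apply $h[-r]$ to $h(-1)^{j}\mathbf{1}$ with $j=t-2k$. Since $h(m)$ for $m\geq 2$ annihilates $h(-1)^{j}\mathbf{1}$, $h(0)$ acts as zero, $h(1)$ acts as $\partial_{h(-1)}$, and the $h(-n-1)$ for $n\geq 0$ act by multiplication, only the creation modes and the single mode $h(1)$ survive:
\[
h[-r]h(-1)^{j}\mathbf{1}=\sum_{n\geq 0}\beta_{-n-1}^{(r)}h(-n-1)h(-1)^{j}\mathbf{1}+j\,\beta_{1}^{(r)}h(-1)^{j-1}\mathbf{1}.
\]
Combining this with \Cref{Lemma: correctionlemma1} produces the two sums in the statement: the first matches the double sum once I show $(r-1)!\beta_{-n-1}^{(r)}=n!\,S_r^{(n+1)}$, and the second matches the Bernoulli sum once I show $(r-1)!\beta_{1}^{(r)}=-B_{r+1}/(r+1)$, after re-indexing by way of the elementary identities $\binom{t}{2k}(t-2k)=(2k+1)\binom{t}{2k+1}$ and $(2k+1)(2k)!=(2k+1)!$.

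The heart of the argument is the evaluation of these two families of coefficients, which I would carry out with the formal change of variables $w=e^{z}-1$, $dw=e^{z}\,dz$ (as already used in \cref{squarebracketgeneral}), turning $\coeff{m}(\log(w+1))^{-r}=\res{w}w^{-m-1}(\log(w+1))^{-r}$ into $\res{z}(e^{z}-1)^{-m-1}z^{-r}e^{z}$. For the creation terms $m=-n-1$ this becomes $\res{z}(e^{z}-1)^{n}e^{z}z^{-r}$; expanding $(e^{z}-1)^{n}e^{z}=\sum_{i}\binom{n}{i}(-1)^{n-i}e^{(i+1)z}$ and extracting the coefficient of $z^{r-1}$ yields $\frac{1}{(r-1)!}\sum_{i}\binom{n}{i}(-1)^{n-i}(i+1)^{r-1}=\frac{n!}{(r-1)!}S_r^{(n+1)}$, which also shows that the sum over $n$ truncates at $n=r-1$. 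For the term $m=1$ I would instead use $(e^{z}-1)^{-2}e^{z}=-\frac{d}{dz}\frac{1}{e^{z}-1}$ together with the Bernoulli generating function $\frac{z}{e^{z}-1}=\sum_{n\geq 0}\frac{B_n}{n!}z^{n}$ to obtain $\beta_{1}^{(r)}=\res{z}(e^{z}-1)^{-2}e^{z}z^{-r}=-\frac{rB_{r+1}}{(r+1)!}$, whence $(r-1)!\beta_{1}^{(r)}=-B_{r+1}/(r+1)$.

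The main obstacle I anticipate is the $m=1$ coefficient: unlike the creation terms, $(e^{z}-1)^{-2}$ carries a genuine pole, so the residue must be organized through the antiderivative identity and the Bernoulli expansion rather than a naive binomial expansion, and one must check that the factorials collapse correctly to $-B_{r+1}/(r+1)$. The Stirling computation, by contrast, is a clean binomial expansion, and the remaining work (assembling the two sums and performing the re-indexing) is routine.
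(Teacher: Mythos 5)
Your proposal is correct and follows essentially the same route as the paper's proof: expand $h[-r]$ in round-bracket modes via the residue formula, apply \Cref{Lemma: correctionlemma1} to $h[-1]^{t}\mathbf{1}$, track the surviving modes, and reduce everything to the two evaluations $(r-1)!\res{z}z^{-r}e^{z}(e^{z}-1)^{n}=n!S_{r}^{(n+1)}$ and $(r-1)!\res{z}z^{-r}e^{z}(e^{z}-1)^{-2}=-B_{r+1}/(r+1)$. The only difference is one of self-containedness: you prove these residue identities directly (binomial expansion for the Stirling terms, the Bernoulli generating function for the pole term) where the paper cites them from \cite{FM}, and you rederive the content of \Cref{Lemma: correctionlemma2} inline via the identity $\binom{t}{2k}(t-2k)=(2k+1)\binom{t}{2k+1}$ rather than invoking it.
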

\begin{proof}
First, using \Cref{Lemma: correctionlemma1} we compute
\begin{multline*}
(r-1)!h[-r]h[-1]^{t}\textbf{1} \\ = (r-1)!\res{z}z^{-r} e^{z} \sum_{n \in \mathbb{Z}} h(n) \left( \sum_{k \geq 0} \binom{t}{2k} \frac{(2k)!}{k!(-24)^{k}}h(-1)^{t-2k}\textbf{1} \right) \left( e^{z} - 1 \right)^{-n-1}. 
\end{multline*}
Applying \Cref{Lemma: correctionlemma2} gives
\begin{multline*}
= (r-1)!\res{z}z^{-r} e^{z} \left( \sum_{n \geq 0} \sum_{k \geq 0} \binom{t}{2k} \frac{(2k)!}{k!(-24)^{k}} h(-n-1)h(-1)^{t-2k}\textbf{1} \left(e^{z}-1 \right)^{n} \right)  \\
+ (r-1)!\res{z}z^{-r} e^{z} \sum_{k \geq 0} \binom{t}{2k+1} \frac{(2k+1)!}{k!(-24)^{k}} h(-1)^{t-2k-1}\textbf{1} \left( e^{z} - 1 \right)^{-2}.
\end{multline*}
It was shown in \cite{FM} that
\begin{align*}
&(r-1)!\res{z}z^{-r}e^{z}(e^{z}-1)^{n} = n!S_{r}^{(n+1)} \\
&(r-1)!\res{z}z^{-r}e^{z}(e^{z}-1)^{-2} = - \frac{B_{r+1}}{r+1}
\end{align*}
and by comparing with the claim, this concludes the proposition.
\end{proof}

\subsection{Proof of Theorem 1.1} \label{Subsection: ProofofTheorem1.1}

We may now take the character of the family of states considered in \Cref{Prop: 10.2generalization} by using the following reformulation of equation (44) of \cite{MT} which gives an explicit description for the character of the state $a = h[-k_{1}]h[-k_{2}] \cdots h[-k_{r}]\textbf{1}$ with $k_{i} \geq 1$ under the map $\textbf{\textit{f}}$ encountered in \cref{padiccharactermap}:
\begin{align}
\textbf{\textit{f}}(a) = \sum_{(\ldots \lbrace s,t \rbrace \ldots) \in \mathcal{P}(\Phi,2)} \prod_{\lbrace s,t \rbrace}  \frac{2(-1)^{s+1}}{(s-1)!(t-1)!}G_{s+t}(q). \label{MTeq44}
\end{align}
Here, $\mathcal{P}(\Phi,2)$ denotes all possible partitions of the set $\Phi$ into parts $\lbrace s,t \rbrace$ of size 2. Then for each such partition, the product is taken over each part. \par
\vspace{10pt}
\begin{rem}
Here and throughout, we employ the normalization
\begin{align}
G_{k}(q) = -\frac{B_{k}}{2k} + \sum_{n \geq 1} \sigma_{k-1}(n)q^{n} \label{normalization}
\end{align}
seen in \cite{pSerre} and so the results of the papers cited here have been adjusted to this change. \par 
\end{rem}

\begin{prop}\label{Prop: specificcharactermap}
For odd integers $r,t \geq 1$, we have
\begin{align*}
\textbf{\textit{f}}\hspace{2pt} (h[-r]h[-1]^{t}\textbf{1}) = \frac{2^{(t+1)/2}t(t-2)!!}{(r-1)!}G_{2}^{(t-1)/2}(q)G_{r+1}(q).
\end{align*}
Here, $n!!$ denotes the product of all odd integers no greater than $n$.
\end{prop}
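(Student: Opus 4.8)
The plan is to apply the character formula \cref{MTeq44} directly to the state $a = h[-r]h[-1]^{t}\textbf{1}$, bypassing the round-bracket expansion of \Cref{Prop: 10.2generalization} (which is instead needed for the later $p$-adic convergence arguments). Here the indexing set $\Phi$ consists of one mode of weight $r$ together with $t$ modes of weight $1$, for a total of $t+1$ modes. Since $t$ is odd, $t+1$ is even, so that $\mathcal{P}(\Phi,2)$ is nonempty and the sum over complete pairings is the relevant one; were $t$ even there would be no complete pairing and the character would vanish.

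Next I would classify the pairings. The unique weight-$r$ mode must be paired with one of the $t$ weight-$1$ modes, while the remaining $t-1$ weight-$1$ modes must be paired among themselves. A pairing $\{r,1\}$ contributes the factor $\frac{2(-1)^{r+1}}{(r-1)!(1-1)!}G_{r+1}(q) = \frac{2}{(r-1)!}G_{r+1}(q)$, using that $r$ is odd so $(-1)^{r+1}=1$, whereas each pairing $\{1,1\}$ contributes $\frac{2(-1)^{2}}{(0!)^{2}}G_{2}(q) = 2G_{2}(q)$. I should note here that every factor is well-defined: the Eisenstein series $G_{s+t}$ vanishes unless $s+t$ is even, and when $s+t$ is even the two weights share a parity, so the sign $(-1)^{s+1}$ is symmetric across the pair. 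For our state all pairs have even total weight ($r+1$ and $2$ respectively), so every complete pairing genuinely contributes, and each contributes
\[
\frac{2}{(r-1)!}G_{r+1}(q)\bigl(2G_{2}(q)\bigr)^{(t-1)/2}.
\]

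It then remains to count the pairings and simplify. There are $t$ choices for the weight-$1$ partner of the weight-$r$ mode, and $(t-2)!!$ perfect matchings of the remaining $t-1$ modes (recall the number of perfect matchings of $2m$ labelled elements is $(2m-1)!!$, and here $2m = t-1$). Multiplying the count $t(t-2)!!$ by the common per-partition contribution and collecting the powers of two via $2\cdot 2^{(t-1)/2} = 2^{(t+1)/2}$ yields exactly
\[
\frac{2^{(t+1)/2}t(t-2)!!}{(r-1)!}G_{2}^{(t-1)/2}(q)G_{r+1}(q).
\]

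The computation is essentially combinatorial once \cref{MTeq44} is in hand, so I do not anticipate a serious obstacle; the points demanding care are confirming that $\Phi$ is treated as a set of labelled mode-positions (so that the two indistinguishable-looking weight-$1$ modes are genuinely distinct for the matching count) and the bookkeeping of the double factorial. Verifying the parity-symmetry of the pairing factor noted above is what guarantees the formula is unambiguous despite the asymmetric-looking $(-1)^{s+1}$.
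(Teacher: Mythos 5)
Your proposal is correct and follows essentially the same route as the paper: both apply \cref{MTeq44} directly to $h[-r]h[-1]^{t}\textbf{1}$ with $\Phi = \lbrace r, 1_{1}, \ldots, 1_{t}\rbrace$, compute the common contribution $\frac{2}{(r-1)!}G_{r+1}(q)\bigl(2G_{2}(q)\bigr)^{(t-1)/2}$ of each complete pairing, and count the pairings as $t \cdot (t-2)!!$. Your additional remarks on the parity symmetry of the sign $(-1)^{s+1}$ and on treating the weight-$1$ modes as labelled are sound points of care that the paper handles implicitly (it labels the $1$s ``for clarity'' and notes $r$ is odd), but they do not change the argument.
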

\begin{proof}
We make use of \cref{MTeq44}. In this case, $\Phi = \lbrace r, 1_{1},1_{2}, \ldots , 1_{t} \rbrace$ where we have labelled the $1$s for clarity. Since $t$ is odd, $\vert \Phi \vert$ is even and so $\mathcal{P}(\Phi, 2) \neq 0$. \par 
\vspace{10pt}
Suppose first that $r$ is paired with $1_{1}$, and the remaining $1_{2}, \ldots , 1_{t}$ are paired amongst themselves. The $t-1$ remaining $1$s then get put in $(t-1)/2$ parts of size $2$, hence
\begin{align*}
\left( \frac{2}{(r-1)!}G_{r+1}(q) \right) \left( \frac{2}{(1-1)!}G_{2}(q) \right)^{(t-1)/2} = \frac{2^{(t+1)/2}}{(r-1)!}G_{2}^{(t-1)/2}(q)G_{r+1}(q)
\end{align*}
where we recall that $r$ is odd here. There are then $(t-2)!!$ distinct ways of partitioning the remaining $1_{2}, \ldots , 1_{t}$ into $(t-1)/2$ parts of size $2$. Each way yields the same expression as above and so we multiply by $(t-2)!!$. Finally, we may repeat this process $t$ times, since $r$ can be paired with $1_{2}$ then with $1_{3}$ and so on, until $r$ is paired with $1_{t}$. This yields the expression
\begin{align*}
\frac{2^{(t+1)/2}t(t-2)!!}{(r-1)!}G_{2}^{(t-1)/2}(q)G_{r+1}(q)
\end{align*}
which establishes the result.
\end{proof}
\vspace{10pt}
With this, we denote (in the notation of \cite{FM}) the rescaled state
\begin{equation*}
u_{r,t} = (1-p^{r})(r-1)!h[-r]h[-1]^{t}\textbf{1}
\end{equation*}
for some prime $p > 3$. By \Cref{Prop: specificcharactermap}, 
\begin{equation}
\textbf{\textit{f}}(u_{r,t}) = (1-p^{r})(2^{(t+1)/2}t(t-2)!!)G_{2}^{(t-1)/2}(q)G_{r+1}(q). \label{characterofurt}
\end{equation}
For each fixed odd $t,l \geq 1$ where $l \not\equiv -1 \mod p-1$, we will assess the convergence of the sequence
\begin{equation}
(u_{p^{a}(p-1)+l,t})_{a \geq 0} = \left( (1-p^{p^{a}(p-1)+l})(p^{a}(p-1))!h[-p^{a}(p-1)-l]h[-1]^{t}\textbf{1} \right)_{a \geq 0} \label{sequenceofstates}
\end{equation}
in the $p$-adic topology. In order to do this, we require that the square-bracket states above be written in the round-bracket formalism. This was done in \Cref{Prop: 10.2generalization}. The following is then an extension of Lemma 10.4 of \cite{FM} and establishes the $p$-adic convergence of the terms involving Stirling numbers. \par 

\begin{lem}\label{Lemma: stirlingconvergence}
Fix $t,l \geq 1$ odd, and let $p$ be an odd prime with $r = p^{a}(p-1)+l$, $s=p^{b}(p-1)+l$ and $a \leq b$. Then for any fixed $k$ within the range $0 \leq k \leq \lfloor t/2 \rfloor$ and any $n \geq 0$ we have
\begin{align*}
(1-p^{r})\binom{t}{2k} \frac{n!S_{r}^{(n+1)}(2k)!}{k!(-24)^{k}} \equiv (1-p^{s})\binom{t}{2k} \frac{n!S_{s}^{(n+1)}(2k)!}{k!(-24)^{k}} \mod p^{a+x+1}.
\end{align*}
for some fixed integer $x$.
\end{lem}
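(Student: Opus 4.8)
The plan is to isolate the only genuinely $r$-dependent quantity and control it uniformly in $j$ and $n$. Clearing the $1/n!$ in the given Stirling formula, write $n!S_r^{(n+1)} = \sum_{j\ge 0}\binom{n}{j}(-1)^{n+j}(j+1)^{r-1}$, so that all dependence on $r$ sits in the powers $(j+1)^{r-1}$. Since $r-1 = p^{a}(p-1)+(l-1)$ and $s-1 = p^{b}(p-1)+(l-1)$, factoring out the common $(j+1)^{l-1}$ reduces the comparison of $n!S_r^{(n+1)}$ and $n!S_s^{(n+1)}$ to comparing $(j+1)^{p^{a}(p-1)}$ with $(j+1)^{p^{b}(p-1)}$.

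The crux, and the only real content, is the uniform congruence
\begin{align*}
(j+1)^{r-1} \equiv (j+1)^{s-1} \pmod{p^{a+1}} \qquad \text{for every } j \ge 0.
\end{align*}
I would prove this by splitting on divisibility by $p$. If $p \nmid (j+1)$, then $p^{a}(p-1) = \phi(p^{a+1})$ is a multiple of the order of $j+1$ in $(\mathbb{Z}/p^{a+1}\mathbb{Z})^{\times}$, so Euler's theorem gives $(j+1)^{p^{a}(p-1)} \equiv 1 \pmod{p^{a+1}}$; since $a \le b$, raising to the $p^{b-a}$ power shows $(j+1)^{p^{b}(p-1)} \equiv 1 \pmod{p^{a+1}}$ as well, so both powers reduce to $(j+1)^{l-1}$. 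If $p \mid (j+1)$, then $v_{p}\big((j+1)^{p^{a}(p-1)}\big) \ge p^{a}(p-1) \ge a+1$, and likewise for the $b$-exponent, so $(j+1)^{r-1}$ and $(j+1)^{s-1}$ both vanish modulo $p^{a+1}$. The decisive feature is that the modulus $p^{a+1}$ is the same for every $j$, so summing against $\binom{n}{j}(-1)^{n+j}$ preserves it and yields $n!S_r^{(n+1)} \equiv n!S_s^{(n+1)} \pmod{p^{a+1}}$, uniformly in $n$.

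It then remains to reinstate the prefactors. Since $r,s \ge p^{a}(p-1) \ge a+1$, we have $1-p^{r} \equiv 1-p^{s} \equiv 1 \pmod{p^{a+1}}$, and hence $(1-p^{r})n!S_r^{(n+1)} \equiv (1-p^{s})n!S_s^{(n+1)} \pmod{p^{a+1}}$. Multiplying through by the common factor $\binom{t}{2k}\frac{(2k)!}{k!(-24)^{k}}$, observe that $\binom{t}{2k}\frac{(2k)!}{k!} = \binom{t}{2k}\binom{2k}{k}k!$ is an integer, so the only possible $p$-denominator comes from $(-24)^{k}$ and has valuation $-k\,v_{p}(24)$, a fixed integer depending only on the fixed $k$. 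Setting $x = -k\,v_{p}(24)$ (so $x=0$ whenever $p>3$) produces the claimed congruence modulo $p^{a+x+1}$. The main obstacle is precisely the uniformity in the middle step: one must guarantee a single power of $p$ working simultaneously across all $j$ and all $n$, which is exactly what the two-case valuation argument secures, leaving everything else as routine bookkeeping with $p$-adic units and integral binomial factors.
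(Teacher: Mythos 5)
Your proof is correct and takes essentially the same route as the paper's: expand $n!S_{r}^{(n+1)} = \sum_{j}\binom{n}{j}(-1)^{n+j}(j+1)^{r-1}$, split on whether $p \mid (j+1)$, use Euler's theorem with $\varphi(p^{a+1}) = p^{a}(p-1)$ to get the uniform congruence $n!S_{r}^{(n+1)} \equiv n!S_{s}^{(n+1)} \bmod p^{a+1}$, and then reinstate the fixed prefactor $\binom{t}{2k}\frac{(2k)!}{k!(-24)^{k}}$ through its $p$-adic valuation. If anything, your bookkeeping is slightly tighter: by absorbing $1-p^{r} \equiv 1-p^{s} \equiv 1 \bmod p^{a+1}$ (valid since $r,s \geq p^{a}(p-1) \geq a+1$) before multiplying by the constant, you avoid the paper's appeal to ``sufficiently large $a$'' for that step.
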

\begin{proof}
Let $\vert \cdot \vert_{p}$ denote the $p$-adic absolute value. We have
\begin{align*}
\left\lvert \binom{t}{2k} \frac{n!S_{r}^{(n+1)}(2k)!}{k!(-24)^{k}} - \binom{t}{2k} \frac{n!S_{s}^{(n+1)}(2k)!}{k!(-24)^{k}} \right\rvert_{p} = \left\lvert n!S_{r}^{(n+1)} - n!S_{s}^{(n+1)} \right\rvert_{p} \left\lvert \binom{t}{2k} \frac{(2k)!}{k!(-24)^{k}} \right\rvert_{p}.
\end{align*}
The rightmost term is dependent only on $t$ and $k$ which are fixed, and so denote by $x$ the $p$-adic valuation of this term, which is also fixed. We show now that $n!S_{r}^{(n+1)} \equiv n!S_{s}^{(n+1)} \mod p^{a+1}$. From the statement of \Cref{Prop: 10.2generalization}, we have the formula
\begin{align*}
n!S_{r}^{(n+1)} = \sum_{j = 0}^{n} \binom{n}{j} (-1)^{n+j} (j+1)^{r-1}.
\end{align*}
If $p \mid (j+1)$ then of course $(j+1)^{r-1} \equiv 0 \mod p^{a+1}$. Suppose then, that $p \nmid (j+1)$. This means that $p$ and $(j+1)$ are coprime, and subsequently that $p^{a+1}$ and $(j+1)$ are coprime. Recall that $\varphi (p^{a+1}) = p^{a}(p-1)$ where $\varphi$ denotes the Euler totient function. So by Euler's theorem,
\begin{align*}
(j+1)^{p^{a}(p-1)} \equiv 1 \mod p^{a+1}.
\end{align*}
Note also that $(j+1)^{l-1}$ is an integer independent of $r$ and so by multiplying both sides of the above congruence by this factor we get $(j+1)^{r-1} \equiv (j+1)^{l-1} \mod p^{a+1}$. Thus
\begin{align*}
n!S_{r}^{(n+1)} \equiv \sum_{\substack{ j=0 \\ p \nmid (j+1) }}^{n} \binom{n}{j} (-1)^{n+j} (j+1)^{l-1} \mod p^{a+1}
\end{align*}
and since the right hand side above does not depend on $r$, this establishes that $n!S_{r}^{(n+1)} \equiv n!S_{s}^{(n+1)} \mod p^{a+1}$. Putting everything together, we get
\begin{align*}
\binom{t}{2k} \frac{n!S_{r}^{(n+1)}(2k)!}{k!(-24)^{k}} \equiv \binom{t}{2k} \frac{n!S_{s}^{(n+1)}(2k)!}{k!(-24)^{k}} \mod p^{a+x+1}.
\end{align*}
Finally since $x$ is fixed, for sufficently large $a$ it is clear that
\begin{align*}
1-p^{p^{a}(p-1)+l} \equiv 1 - p^{p^{b}(p-1)+l} \mod p^{a+x+1},
\end{align*}
and so by combining the above two congruences together, we obtain the desired result.
\end{proof}
Next we establish the $p$-adic convergence of the terms involving the Bernoulli numbers. This is done in the following lemma by use of the classical Kummer congruences (cf. \cite{Kummer} for the statement and proof).
\begin{lem}\label{Lemma: bernoulliconvergence}
Fix $t,l \geq 1$ odd where $l \not\equiv -1 \mod p-1$, and let $p \geq 5$ be a prime with $r = p^{a}(p-1)+l$, $s=p^{b}(p-1)+l$ and $a \leq b$. Then for $k$ in the range $0 \leq k \leq \lfloor (t-1)/2 \rfloor$, we have
\begin{align*}
(1-p^{r}) \binom{t}{2k+1} \frac{B_{r+1}(2k+1)!}{k!(r+1)(-24)^{k}} \equiv (1-p^{s})\binom{t}{2k+1} \frac{B_{s+1}(2k+1)!}{k!(s+1)(-24)^{k}} \mod p^{a+y+1}
\end{align*}
for some fixed integer $y$.
\end{lem}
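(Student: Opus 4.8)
The plan is to reuse the structure of \Cref{Lemma: stirlingconvergence}: split each coefficient into a part that is fixed (independent of the exponents $r,s$) and a part that varies with the exponent, then show that the varying part is precisely what the Kummer congruences control.

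First I would isolate the fixed factor
\[
C_{t,k} = \binom{t}{2k+1}\frac{(2k+1)!}{k!(-24)^{k}},
\]
which depends only on the fixed data $t$ and $k$. Since $p \geq 5$, the integer $24 = 2^{3}\cdot 3$ is a $p$-adic unit, so $(-24)^{k}$ is a unit and $C_{t,k} \in \mathbb{Z}_{p}$; write $y = v_{p}(C_{t,k}) \geq 0$ for its (fixed) $p$-adic valuation, in exact analogy with the integer $x$ of \Cref{Lemma: stirlingconvergence}. Each side of the asserted congruence is then $C_{t,k}$ times $(1-p^{r})\tfrac{B_{r+1}}{r+1}$ (respectively with $s$), so it suffices to prove
\[
(1-p^{r})\frac{B_{r+1}}{r+1} \equiv (1-p^{s})\frac{B_{s+1}}{s+1} \pmod{p^{a+1}},
\]
after which multiplication by $C_{t,k}$ promotes this to the claimed congruence modulo $p^{a+y+1}$.

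Next I would recognize these two quantities as the values interpolated by the Kubota--Leopoldt $p$-adic zeta function, i.e.\ as the quantities governed by the Euler-factor form of the Kummer congruences. Setting $K = r+1$ and $K' = s+1$, so that $K-1 = r$ and $K'-1 = s$, I would verify the three hypotheses. Parity: $p$ odd makes $p-1$ even and $l$ is odd, so $r = p^{a}(p-1)+l$ is odd and $K$ is even (likewise $K'$), hence $B_{K}$ and $B_{K'}$ are the nonvanishing even-index Bernoulli numbers. Residue mod $p-1$: reducing gives $K \equiv l+1 \pmod{p-1}$, so the hypothesis $l \not\equiv -1 \pmod{p-1}$ is exactly the statement $K \not\equiv 0 \pmod{p-1}$, which guarantees both the $p$-integrality of $(1-p^{K-1})B_{K}/K$ (via von Staudt--Clausen) and the applicability of the congruences. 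Exponent congruence: $K - K' = r - s = (p^{a}-p^{b})(p-1)$, and $a \leq b$ gives $p^{a}\mid (p^{a}-p^{b})$, whence $K \equiv K' \pmod{p^{a}(p-1)}$. The generalized Kummer congruence then yields
\[
(1-p^{K-1})\frac{B_{K}}{K} \equiv (1-p^{K'-1})\frac{B_{K'}}{K'} \pmod{p^{a+1}},
\]
which is precisely the reduction displayed above.

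The step requiring the most care is invoking the correct refinement of Kummer's congruences --- the one carrying the Euler factor $(1-p^{k-1})$ and valid uniformly for weights lying in a fixed nonzero residue class modulo $p-1$ --- and verifying its hypotheses, the decisive one being $l \not\equiv -1 \pmod{p-1}$, which excludes the exceptional class where $(1-p^{K-1})B_{K}/K$ would fail to be $p$-integral. Note that, in contrast with \Cref{Lemma: stirlingconvergence}, the factor $1-p^{r}$ needs no separate large-$a$ estimate here, since it is absorbed directly into the interpolated quantity. The remaining steps --- the parity and residue checks, the exponent congruence, and tracking the fixed valuation $y$ through the final multiplication --- are routine.
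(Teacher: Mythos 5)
Your proposal is correct and follows essentially the same route as the paper's proof: isolate the fixed factor $\binom{t}{2k+1}\frac{(2k+1)!}{k!(-24)^{k}}$, let $y$ be its $p$-adic valuation, and apply the Euler-factor (generalized) Kummer congruence to $(1-p^{r})\frac{B_{r+1}}{r+1}$ after checking the same three hypotheses (parity from $l$ odd, the non-exceptional residue class from $l \not\equiv -1 \bmod p-1$, and $r+1 \equiv s+1 \bmod \varphi(p^{a+1})$). Your additional observations --- that $y \geq 0$ since $24$ is a $p$-adic unit for $p \geq 5$, and that the factor $1-p^{r}$ is absorbed into the interpolated quantity rather than estimated separately as in \Cref{Lemma: stirlingconvergence} --- are accurate refinements of what the paper leaves implicit.
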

\begin{proof}
First write
\begin{align*}
\left\lvert (1-p^{r})\frac{B_{r+1}}{r+1} - (1-p^{s})\frac{B_{s+1}}{s+1} \right\rvert_{p} \left\lvert \binom{t}{2k+1} \frac{(2k+1)!}{k(-24)^{k}} \right\rvert_{p}.
\end{align*}
Once again the rightmost term is dependent only on $t$ and $k$ which are fixed, and so denote by $y$ the $p$-adic valuation of this term, which is also fixed. Notice that
\begin{align*}
p^{a}(p-1) + l+1 \equiv p^{b}(p-1) + l+1 \mod p^{a}(p-1)
\end{align*}
and $p^{a}(p-1) = \varphi (p^{a+1})$. Since $l$ is odd, both $r+1$ and $s+1$ are even. Since $l \not\equiv -1 \mod p-1$, $p-1$ does not divide $l+1$ and so both $r+1$ and $s+1$ are not divisible by $p-1$ (since $p > 3$). Thus by Kummer's congruence we obtain
\begin{align*}
\left\lvert (1-p^{r})\frac{B_{r+1}}{r+1} - (1-p^{s})\frac{B_{s+1}}{s+1} \right\rvert_{p}\left\lvert \binom{t}{2k+1} \frac{(2k+1)!}{k(-24)^{k}} \right\rvert_{p} = \frac{1}{p^{a+y+1}}
\end{align*}
which gives us the right congruence.
\end{proof}
We are now in a position to prove \cref{Th: theorem1}. The congruences established in both \Cref{Lemma: stirlingconvergence,Lemma: bernoulliconvergence} imply that
\begin{align*}
\lim_{a \to \infty} u_{p^{a}(p-1)+l,t} = u_{l,t}
\end{align*}
for some state $u_{l,t}$ in the $p$-adic Heisenberg VOA $S_{1}$. Then in the $p$-adic topology and from \cref{characterofurt}, we have
\begin{align*}
\textbf{\textit{f}}(u_{l,t}) &= \lim_{a \to \infty} (1-p^{p^{a}(p-1)+l})2^{(t+1)/2}t(t-2)!!G_{2}^{(t-1)/2}(q)G_{p^{a}(p-1)+l+1}(q) \\
&= 2^{(t+1)/2}t(t-2)!! G_{2}^{(t-1)/2}(q)\lim_{a \to \infty} (1-p^{p^{a}(p-1)+l+1})G_{p^{a}(p-1)+l+1}(q).
\end{align*}
Since $\lim_{a \to \infty} 1-p^{p^{a}(p-1)+l+1} = 1$ and $\lim_{a \to \infty} p^{a}(p-1)+l+1 = l+1$,
\begin{align*}
\textbf{\textit{f}}(u_{l,t}) = 2^{(t+1)/2}t(t-2)!!G_{2}^{(t-1)/2}(q)G_{l+1}^{\star}(q)
\end{align*}
where $G_{l+1}^{\star}(q)$ is a $p$-adic Eisenstein modular form of weight $l+1$. This proves \cref{Th: theorem1}. \par

\section{P-adic characters in lattice VOAs} \label{Section: padiccharactersinlatticevoas}

\subsection{Lattice VOAs}

We recount relevant facts about lattice VOAs $V_{\Lambda}$ which are necessary in the computation of characters in the subsection below. In the context of \cite{DMN} or \cite{LL}, let $\Lambda$ be an even (integral) unimodular lattice of rank $d$ with associated symmetric $\mathbb{Z}$-bilinear form $\langle \cdot , \cdot \rangle$, and let
\begin{align*}
\mathfrak{h} = \Lambda \otimes_{\mathbb{Z}} \mathbb{C}.
\end{align*}
Following the process outlined in \Cref{Subsection: HeisenbergandSquareBracket}, construct $S_{\alg}$ where we denote by $a(n)$ the action of $a \otimes t^{n}$ on $S_{\alg}$ for $a \in \mathfrak{h}$, and where
\begin{align*}
[a(m),b(n)] =  \delta_{m+n,0} \langle a,b \rangle m.
\end{align*}
for integers $m,n \in \mathbb{Z}$. Denote by $\mathbb{C}\lbrace \Lambda \rbrace$ the group algebra with basis $\lbrace e^{\alpha} \mid \alpha \in \Lambda \rbrace$ and form the space
\begin{align*}
V_{\Lambda} = S_{\alg} \otimes \mathbb{C}\lbrace \Lambda \rbrace.
\end{align*}
Vertex operators of states in $V_{\Lambda}$ with trivial $\mathbb{C}\lbrace \Lambda \rbrace$ tensor factor are defined in the same way as in $S_{\alg}$. We identify $S_{\alg}$ with $S_{\alg} \otimes e^{0}$ hence the vacuum of $V_{\Lambda}$ is the same as that of $S_{\alg}$. On $V_{\Lambda}$, states $a \in \mathfrak{h}$ with $n \neq 0$ act on $S_{\alg}$ as before:
\begin{align*}
a(n)(b \otimes e^{\alpha}) = a(n)b \otimes e^{\alpha},
\end{align*}
whereas the operator $a(0)$ acts on $\mathbb{C}\lbrace \Lambda \rbrace$ as
\begin{align}
a(0)(b \otimes e^{\alpha}) = \langle a, \alpha \rangle (b \otimes e^{\alpha}). \label{zeroproductonlattice}
\end{align}
Define the dual lattice $\Lambda^{\circ} = \Set{ \alpha \in \mathfrak{h} | \langle \alpha, \Lambda \rangle \subset \mathbb{Z} }$. Since $\Lambda$ is integral, $\Lambda \subset \Lambda^{\circ}$ and so we may consider the coset decomposition
\begin{align*}
\Lambda^{\circ} \bigcup_{i \in \Lambda^{\circ} / \Lambda} (\Lambda + \lambda_{i})
\end{align*}
From \cite{Dong}, each space
\begin{align*}
V_{\Lambda + \lambda_{i}} = S_{\alg} \otimes \mathbb{C}\lbrace \Lambda + \lambda_{i} \rbrace
\end{align*}
constitute the complete set of non-isomorphic irreducible $V_{\Lambda}$-modules, and furthermore, a theorem of Zhu shows that each $Z_{\Lambda + \lambda_{i}}(a,q)$ is modular. \par 
\vspace{10pt}
It is easy to show (\cite{DMN}, Lemma 4.2.1) that $Z_{\Lambda + \lambda_{i}}(a,q) = 0$ for any $a = S_{\alg} \otimes e^{\alpha} \in V_{\Lambda}$ for some non-zero $\alpha \in \Lambda$. The proof follows from the fact that any state in $V_{\Lambda}$ with non-zero $\mathbb{C}\lbrace \Lambda \rbrace$ tensor factor has no mode which preserves grading. As such we will consider only characters of states of the form $S_{\alg} \otimes e^{0} \in V_{\Lambda}$, and we henceforth omit writing the trivial $\mathbb{C}\lbrace \Lambda \rbrace$ tensor factor for such states. For each $\alpha \in \Lambda + \lambda_{i}$, we have
\begin{align}
V_{\Lambda + \lambda_{i}} = \bigoplus_{\alpha \in \Lambda + \lambda_{i}} S_{\alg} \otimes e^{\alpha} \label{voacosetdecomposition}
\end{align}
and so following \cite{DMN}, denote for $a \in V_{\Lambda}$ the functions
\begin{align*}
&Z_{\alpha}(a,q) = Z_{S_{\alg} \otimes \iota (e_{\alpha})}(a,q) \\
&Z_{i}(a,q) = Z_{\Lambda + \lambda_{i}}(a,q).
\end{align*}
It is then clear from \cref{voacosetdecomposition} that
\begin{align}
Z_{i}(a,q) = \sum_{\alpha \in \Lambda + \lambda_{i}} Z_{\alpha}(a,q). \label{cosetcharactersum}
\end{align}
Computing characters in $V_{\Lambda}$ is similar to that in $S_{\alg}$, however one must be cautious due to discrepancies caused by the behaviour of $a(0)$ exposed in \cref{zeroproductonlattice} above. In particular, equation \cref{MTeq44} of \cite{MT} does not apply for most states in $V_{\Lambda}$. We resort then to using the following general recursive expression given first in \cite{Zhu} from which \cref{MTeq44} is based: Given any $V$-module $M$ and for $a,b \in V$, we have
\begin{align}
Z_{M}(a[-n]b,q) = \delta_{n,1} \text{Tr}_{M} \left( o(a)o(b)q^{L_{0}- c_{V}/24} \right) + \sum_{m \geq 1} \frac{2(-1)^{m+1}}{m!(n-1)!}G_{n+m}(q)Z_{M}(a[m]b,q) \label{zhurecursion}
\end{align}
where we have again applied the normalization seen in \cref{normalization}.

\subsection{Proof of Theorem 1.2}  \label{Subsection: ProofofThm1.2}

We take $a \in V_{\Lambda}$ such that $\langle a, a \rangle = 1$ and $\alpha \in \Lambda^{\circ}$ throughout. We will consider as in \Cref{Subsection: somesquarebracketstates} the states 
\begin{align*}
v_{r,t} = (r-1)!a[-r]a[-1]^{t}\textbf{1} \in S_{\alg} \otimes e^{0}
\end{align*}
where $r,t$ are odd integers, and where $t \geq 1$ and $r \geq 3$. \par 
\vspace{10pt}
In order to assess convergence later, we must re-write $v_{r,t}$ in the round-bracket formalism. This will be similar to what was done in \Cref{Prop: 10.2generalization}, however from \cref{zeroproductonlattice}, we must be careful with the product $a(0)$. Recall that from \cref{squarebracketh-1} we have
\begin{align*}
a[-1] = a(-1) + \frac{1}{2}a(0) - \frac{1}{12}a(-1) + \cdots.
\end{align*}
We wish to find expressions for $a[-1]^{t}\textbf{1}$ and $a(1)a[-1]^{t}\textbf{1}$. Since both are elements of $S_{\alg} \otimes e^{0}$ however, the product $a(0)$ in the expansion above acts as zero since $\langle a, 0 \rangle = 0$. We will thus obtain the same expressions for $a[-1]^{t}\textbf{1}$ and $a(1)a[-1]^{t}\textbf{1}$ as the ones in the Heisenberg case given in \Cref{Lemma: correctionlemma1,Lemma: correctionlemma2} with $h=a$. For the same reason, the statement of \Cref{Prop: 10.2generalization} holds in in this case as well:
 
\begin{prop} \label{Prop: roundbracketlatticestates}
Let $a \in V_{\Lambda}$ where $\langle a, a \rangle = 1$ and $\alpha \in \Lambda^{\circ}$. For $r,t$ odd integers and $t \geq 1$ and $r \geq 3$, we have
\begin{align*}
v_{r,t} =  \sum_{n \geq 0}\sum_{k \geq 0} &\binom{t}{2k} \frac{n!S_{r}^{(n+1)}(2k)!}{k!(-24)^{k}} a(-n-1)a(-1)^{t-2k}\textbf{1} \\
&- \sum_{k \geq 0} \binom{t}{2k+1} \frac{B_{r+1}(2k+1)!}{k!(r+1)(-24)^{k}} a(-1)^{t-2k-1}\textbf{1}
\end{align*}
\end{prop}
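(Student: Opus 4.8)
The plan is to observe that \Cref{Prop: roundbracketlatticestates} is formally identical to \Cref{Prop: 10.2generalization}, with the Heisenberg generator $h$ replaced by the lattice generator $a$ satisfying $\langle a, a \rangle = 1$. The essential point, already flagged in the surrounding text, is that all of the square-bracket machinery used in the Heisenberg proof depends only on the commutation relations among the modes $a(n)$ for $n \neq 0$, together with the behaviour of $a(0)$. Since the state in question lives in $S_{\alg} \otimes e^{0}$ and we apply $a(0)$ only to vectors whose group-algebra factor is $e^{0}$, \cref{zeroproductonlattice} gives $a(0)(b \otimes e^{0}) = \langle a, 0 \rangle (b \otimes e^{0}) = 0$, so the zero mode annihilates everything in sight. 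Thus the $a(0)$ terms appearing in the expansion \cref{squarebracketh-1} of $a[-1]$ drop out exactly as the $h(0)$ terms did, and the normalization $\langle a, a \rangle = 1$ makes $[a(m), a(n)] = \delta_{m+n,0}\, m$ coincide with the Heisenberg bracket $[h(m), h(n)] = \delta_{m+n,0}\, m$.

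First I would verify that \Cref{Lemma: correctionlemma1,Lemma: correctionlemma2} carry over verbatim under the substitution $h = a$. The proof of \Cref{Lemma: correctionlemma1} is an induction whose only inputs are $a[-1]\textbf{1} = a(-1)\textbf{1}$ (which follows from \cref{squarebracketh-1} once the $a(0)$ term is killed) and the action of $a[-1]$ on monomials $a(-1)^{j}\textbf{1}$; that action is governed entirely by the bracket $[a(m), a(n)]$, which agrees with the Heisenberg case since $\langle a, a \rangle = 1$. Hence the same recursion and the same appeal to Pascal's rule produce the identical closed form, and \Cref{Lemma: correctionlemma2} follows in the same way. I would state this reduction explicitly rather than reproducing the inductions.

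Next I would run the residue computation of \Cref{Prop: 10.2generalization}. Starting from \cref{squarebracketgeneral}, one writes
\begin{align*}
(r-1)!\, a[-r]a[-1]^{t}\textbf{1} = (r-1)!\res{z} z^{-r} e^{z} \sum_{n \in \mathbb{Z}} a(n)\left( a[-1]^{t}\textbf{1} \right)(e^{z}-1)^{-n-1},
\end{align*}
substitutes the expressions for $a[-1]^{t}\textbf{1}$ and $a(1)a[-1]^{t}\textbf{1}$ coming from the lattice analogues of \Cref{Lemma: correctionlemma1,Lemma: correctionlemma2}, and splits off the $n \geq 0$ creation terms from the single surviving annihilation term $a(1)$, which contributes the $(e^{z}-1)^{-2}$ piece. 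The two residue evaluations
\begin{align*}
(r-1)!\res{z} z^{-r} e^{z}(e^{z}-1)^{n} = n!\, S_{r}^{(n+1)}, \qquad (r-1)!\res{z} z^{-r} e^{z}(e^{z}-1)^{-2} = -\frac{B_{r+1}}{r+1}
\end{align*}
are quoted directly from \cite{FM} and are independent of the lattice, so they apply unchanged. Matching coefficients yields the stated formula.

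The main obstacle, such as it is, lies not in computation but in being precise about where the lattice structure could have interfered and confirming that it does not: one must check that the only mode with a non-grading-preserving or $\alpha$-dependent action, namely $a(0)$, never acts on a vector with nontrivial $\mathbb{C}\{\Lambda\}$-factor during the entire expansion, so that \cref{zeroproductonlattice} always returns zero. Because every intermediate state stays in $S_{\alg} \otimes e^{0}$ and the truncation of $Y(a(-1)\textbf{1}, e^{z}-1)$ keeps all sums finite, this is automatic; I would record it as the single substantive remark that distinguishes this proposition from its Heisenberg predecessor, after which the result follows identically.
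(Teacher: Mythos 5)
Your proposal is correct and takes essentially the same route as the paper: the paper's own justification consists precisely of observing that all states involved lie in $S_{\alg} \otimes e^{0}$, so by \cref{zeroproductonlattice} the $a(0)$ terms in the expansion of $a[-1]$ act as zero, whence \Cref{Lemma: correctionlemma1,Lemma: correctionlemma2} and the computation of \Cref{Prop: 10.2generalization} carry over verbatim with $h=a$ (the hypothesis $\langle a,a\rangle = 1$ ensuring the mode commutators match the Heisenberg ones). The only difference is presentational: you re-run the residue computation explicitly, while the paper simply cites its earlier proposition.
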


We now take the character of $v_{r,t}$ over $S_{\alg} \otimes e^{\alpha}$. The following is an extension of Lemma 4.2.2 of \cite{DMN} and is proven similarly.

\begin{lem} \label{Lemma: DMN4.2.2}
Let $a \in V_{\Lambda}$ where $\langle a, a \rangle = 1$ and $\alpha \in \Lambda^{\circ}$. For $t \geq 0$ we have
\begin{align*}
Z_{\alpha}(a[-1]^{t}\textbf{1},q) = \left( \sum_{k \geq 0} \binom{t}{2k} \frac{(2k)!}{k!} \langle a, \alpha \rangle^{t-2k} G_{2}(q)^{k} \right) q^{\frac{1}{2} \langle \alpha, \alpha \rangle} / \eta(q)^{d}.
\end{align*}
\end{lem}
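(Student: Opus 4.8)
The plan is to compute $Z_\alpha(a[-1]^t\textbf{1},q)$ by first re-expressing the square-bracket state $a[-1]^t\textbf{1}$ in the round-bracket formalism, then applying the trace over $S_{\alg}\otimes e^\alpha$ together with the contraction-based character formula. By \Cref{Lemma: correctionlemma1} (which holds verbatim in the lattice setting since $a(0)$ annihilates states in $S_{\alg}\otimes e^0$, as noted before \Cref{Prop: roundbracketlatticestates}), we have
\begin{align*}
a[-1]^t\textbf{1} = \sum_{k\geq 0}\binom{t}{2k}\frac{(2k)!}{k!(-24)^k}a(-1)^{t-2k}\textbf{1}.
\end{align*}
So it suffices to compute $Z_\alpha(a(-1)^{m}\textbf{1},q)$ for each $m=t-2k$ and reassemble.

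First I would recall the key distinction from the pure Heisenberg case: on the module $S_{\alg}\otimes e^\alpha$, the zero mode $o(a)=a(0)$ acts as the scalar $\langle a,\alpha\rangle$ via \cref{zeroproductonlattice}, so unlike over $e^0$ it does not vanish. The character $Z_\alpha(a(-1)^m\textbf{1},q)$ is therefore computed by summing over all pairings/contractions of the $m$ copies of $a$, where each pair $\{a,a\}$ contracted against each other contributes a factor governed by $G_2(q)$ (the propagator, since $\langle a,a\rangle=1$), and each \emph{unpaired} $a$ contributes a zero-mode factor $\langle a,\alpha\rangle$. This is precisely the mechanism underlying \cref{MTeq44}, adapted to the module via \cref{zhurecursion}; the recursion \cref{zhurecursion} is where the $\delta_{n,1}\mathrm{Tr}(o(a)o(b)\cdots)$ term injects the $\langle a,\alpha\rangle$ factors. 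Concretely, choosing $2k$ of the $m$ legs to be paired among themselves (in $(2k-1)!!=\frac{(2k)!}{2^k k!}$ ways) and leaving $m-2k$ legs as zero modes yields a term $\binom{m}{2k}(2k-1)!!\,\langle a,\alpha\rangle^{m-2k}(2G_2(q))^{k}$, all multiplied by the base character $q^{\frac12\langle\alpha,\alpha\rangle}/\eta(q)^d$ coming from $\mathrm{Tr}_{S_{\alg}\otimes e^\alpha}q^{L_0-c_V/24}$ (here $c_V=d$). Substituting $a[-1]^t\textbf{1}=\sum_k\binom{t}{2k}\frac{(2k)!}{k!(-24)^k}a(-1)^{t-2k}\textbf{1}$ and carefully collecting the combined binomial, double-factorial, and $(-24)^{-k}$ coefficients should telescope into the single clean sum $\sum_{k}\binom{t}{2k}\frac{(2k)!}{k!}\langle a,\alpha\rangle^{t-2k}G_2(q)^k$.

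I expect the main obstacle to be the bookkeeping in the double-sum collapse: one must show that combining the expansion coefficients $\frac{(2k)!}{k!(-24)^k}$ from \Cref{Lemma: correctionlemma1} with the contraction coefficients and the factors of $2$ (and the sign conventions in $G_2$ under the normalization \cref{normalization}) produces exactly $\frac{(2k)!}{k!}G_2(q)^k$ with no residual factors of $-24$ or $2$. The cleanest route is probably to avoid re-deriving the pairing count from scratch and instead proceed by induction on $t$ using the recursion \cref{zhurecursion} directly on $Z_\alpha(a[-r]a[-1]^t\textbf{1},q)$-type expressions, tracking how each application of $a[m]$ either contracts ($m\geq 1$, producing $G_{m+1}$) or hits the zero-mode trace term ($\delta$-contribution, producing $\langle a,\alpha\rangle$). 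Either way, the base case $t=0$ gives $Z_\alpha(\textbf{1},q)=q^{\frac12\langle\alpha,\alpha\rangle}/\eta(q)^d$, and the inductive step should reproduce the stated generating function; matching it against the DMN formula (Lemma 4.2.2 of \cite{DMN}) at $t=0,1$ provides a useful consistency check.
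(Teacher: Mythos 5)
Your recommended route --- induction on $t$ via the recursion \cref{zhurecursion}, with base case $Z_{\alpha}(\textbf{1},q)=q^{\frac12\langle\alpha,\alpha\rangle}/\eta(q)^{d}$, the $\delta_{n,1}$ trace term contributing the scalar $\langle a,\alpha\rangle$ (since $o(a)=a(0)$ acts on $S_{\alg}\otimes e^{\alpha}$ as that scalar by \cref{zeroproductonlattice}), and the $m=1$ contraction contributing $2(t-1)G_{2}(q)Z_{\alpha}(a[-1]^{t-2}\textbf{1},q)$ --- is precisely the paper's proof; the only detail you leave out is the binomial bookkeeping (Pascal's rule, as in \Cref{Lemma: correctionlemma1}) that merges the two inductive terms into the stated sum, and that part is routine.

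Your first approach, however, contains a genuine error, not merely hard bookkeeping. The pairing rule ``each contracted pair contributes $2G_{2}(q)$, each free leg contributes $\langle a,\alpha\rangle$'' is a statement about \emph{square-bracket} states: it is exactly what \cref{MTeq44} computes, and indeed exactly what the lemma itself asserts, since $\binom{t}{2k}\frac{(2k)!}{k!}G_{2}^{k}=\binom{t}{2k}(2k-1)!!\,(2G_{2})^{k}$. It is false for round-bracket states $a(-1)^{m}\textbf{1}$. Already in the Heisenberg case, $h[-1]^{2}\textbf{1}=h(-1)^{2}\textbf{1}-\tfrac{1}{12}\textbf{1}$ together with $Z(h[-1]^{2}\textbf{1},q)=2G_{2}(q)/\eta(q)$ forces
\begin{equation*}
Z(h(-1)^{2}\textbf{1},q)=\left(2G_{2}(q)+\tfrac{1}{12}\right)/\eta(q),
\end{equation*}
so the round-bracket propagator is $2G_{2}+\tfrac{1}{12}$ (that is, $G_{2}$ stripped of its constant term $-\tfrac{1}{24}$, then doubled), not $2G_{2}$. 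Only with this corrected propagator do the $(-24)^{-k}$ coefficients from \Cref{Lemma: correctionlemma1} cancel: in the Hermite form of your plan, the conversion is $e^{c_{1}\partial^{2}/2}$ with $c_{1}=-\tfrac{1}{12}$ and the trace is $e^{c_{2}\partial^{2}/2}$ with $c_{2}=2G_{2}+\tfrac{1}{12}$, so the composite parameter is $c_{1}+c_{2}=2G_{2}$, recovering the lemma; with $2G_{2}$ per pair as you wrote, you would instead end up with $\bigl(G_{2}-\tfrac{1}{24}\bigr)^{k}$ in place of $G_{2}^{k}$. There is also the smaller point that \Cref{Lemma: correctionlemma1} is stated only for odd $t$, whereas the lemma covers all $t\geq 0$. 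So the detour through the round-bracket basis is both unnecessary and, as you set it up, incorrect: keep the induction, which is what the paper does.
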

\begin{proof}
We proceed by induction on $t$. The case $t=0$ is simply the graded dimension over $S_{\alg} \otimes e^{0}$, and so let $t=1$. Then using \cref{zhurecursion},
\begin{align*}
Z_{\alpha}(a[-1]\textbf{1},q) &= \langle a, \alpha \rangle Z_{\alpha}(\textbf{1},q) = \langle a, \alpha \rangle q^{\frac{1}{2}} / \eta (q)^{d}
\end{align*}
where we used the fact that $o(a) = \langle a, \alpha \rangle$ since we work over $S_{\alg} \otimes e^{\alpha}$. Suppose the claim holds up to $t-1$. Then using \cref{zhurecursion} again, we obtain
\begin{align}
Z_{\alpha}(a[-1]^{t}\textbf{1}, q) &= Z_{\alpha}(a[-1]a[-1]^{t-1}\textbf{1},q) \nonumber \\
&= \langle a, \alpha \rangle Z_{\alpha}(a[-1]^{t-1}\textbf{1},q) + 2(t-1)G_{2}(q)Z_{\alpha}(a[-1]^{t-2}\textbf{1},q). \label{DMN4.2.2eq1}
\end{align}
By induction, we write the first term of \cref{DMN4.2.2eq1} as
\begin{align}
\langle a, \alpha \rangle^{t} + \left( \sum_{k \geq 1} \binom{t-1}{2k} \frac{(2k)!}{k!} \langle a, \alpha \rangle^{t-2k}G_{2}^{k}(q) \right)q^{\frac{1}{2}\langle \alpha, \alpha \rangle} / \eta (q)^{d}, \label{DMN4.2.2eq2}
\end{align}
and similarly the second term of \cref{DMN4.2.2eq1} equates to
\begin{align}
&\left( \sum_{k \geq 0} \binom{t-2}{2k} \frac{2(t-1)(2k)!}{k!} \langle a, \alpha \rangle^{t-2k-2} G_{2}(q)^{k+1} \right) q^{\frac{1}{2}\langle \alpha, \alpha \rangle} / \eta (q)^{d} \nonumber \\
&= \left( \sum_{k \geq 0} \frac{(k+1)(2k+1)!(2)(t-1)!}{(k+1)(2k+1)!(t-2k-2)!(k)!} \langle a, \alpha \rangle^{t-2k-2} G_{2}(q)^{k+1} \right) q^{\frac{1}{2}\langle \alpha, \alpha \rangle} / \eta (q)^{d} \nonumber \\
&= \left( \sum_{k \geq 0} \binom{t-1}{2k+1} \frac{(2k+2)!}{(k+1)!} \langle a, \alpha \rangle^{t-2k-2} G_{2}(q)^{k+1} \right) q^{\frac{1}{2}\langle \alpha, \alpha \rangle} / \eta (q)^{d} \nonumber \\
&= \left( \sum_{k \geq 1} \binom{t-1}{2k-1} \frac{(2k)!}{k!} \langle a, \alpha \rangle^{t-2k} G_{2}(q)^{k} \right) q^{\frac{1}{2}\langle \alpha, \alpha \rangle} / \eta (q)^{d} \label{DMN4.2.2eq3}.
\end{align}
Summing \cref{DMN4.2.2eq2,DMN4.2.2eq3} and using Pascal's rule as in \Cref{Lemma: correctionlemma1}, we obtain
\begin{align*}
Z_{\alpha}(a[-1]^{t}\textbf{1},q) = \left( \sum_{k \geq 0} \binom{t}{2k} \frac{(2k)!}{k!} \langle a, \alpha \rangle^{t-2k} G_{2}(q)^{k} \right) q^{\frac{1}{2} \langle \alpha, \alpha \rangle} / \eta(q)^{d}
\end{align*}
which proves the result.
\end{proof}

Following \cite{DMN}, we will be denoting
\begin{align*}
f_{\alpha,t}(q) = \sum_{k \geq 0} \binom{t}{2k} \frac{(2k)!}{k!} G_{2}(q)^{k} \langle a, \alpha \rangle^{t-2k} 
\end{align*}
in the following analogue of \Cref{Prop: specificcharactermap}:

\begin{prop} \label{Prop: specificcharactermap2}
Let $a \in V_{\Lambda}$ where $\langle a, a \rangle = 1$. For $r,t$ odd integers and $t \geq 1$ and $r \geq 3$ we have
\begin{align*}
Z(v_{r,t},q) = 2tG_{r+1}(q) \left( \sum_{k \geq 0} \binom{t-1}{2k} \frac{(2k)!}{k!} G_{2}(q)^{k} \Theta_{\Lambda, t-2k-1}(q) \right) / \eta (q)^{d}
\end{align*}
where
\begin{align*}
\Theta_{\Lambda, t-2k-1}(q) = \sum_{\alpha \in \Lambda} \langle a, \alpha \rangle^{t-2k-1} q^{\frac{1}{2} \langle \alpha, \alpha \rangle}.
\end{align*}
\end{prop}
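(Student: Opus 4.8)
**

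The goal is to compute the full character $Z(v_{r,t},q)$, which by definition sums the module-characters $Z_\alpha(v_{r,t},q)$ over $\alpha$ ranging through the lattice, and to repackage the result in terms of the theta series $\Theta_{\Lambda,t-2k-1}$. My plan is to apply the recursion \cref{zhurecursion} once to peel off the leading operator $a[-r]$, and then to feed in the closed form for $Z_\alpha(a[-1]^t\mathbf{1},q)$ already established in \Cref{Lemma: DMN4.2.2}.

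Here is the flow I would follow. First I would write $v_{r,t}=(r-1)!\,a[-r]a[-1]^t\mathbf{1}$ and apply \cref{zhurecursion} with $a[-n]$ taken to be $a[-r]$, $b = a[-1]^t\mathbf{1}$, and $n=r$. Since $r\geq 3$, the Kronecker delta term $\delta_{r,1}$ vanishes, so only the sum survives. The key structural observation is that $a[m]\,a[-1]^t\mathbf{1}$ for $m\geq 1$ is governed by the Heisenberg commutator $[a(m),a(-1)]=\langle a,a\rangle m\,\delta_{m-1,0}$, with $\langle a,a\rangle=1$; this forces $a[m]$ to act nontrivially only for $m=1$, giving $a[1]a[-1]^t\mathbf{1}=t\,a[-1]^{t-1}\mathbf{1}$ (the square-bracket version of \Cref{Lemma: correctionlemma2}, whose leading term is what survives). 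Thus the recursion collapses to a single term:
\begin{align*}
Z_\alpha(v_{r,t},q) = (r-1)!\cdot\frac{2}{(r-1)!}G_{r+1}(q)\cdot t\cdot Z_\alpha(a[-1]^{t-1}\mathbf{1},q) = 2t\,G_{r+1}(q)\,Z_\alpha(a[-1]^{t-1}\mathbf{1},q).
\end{align*}
Next I would substitute the formula from \Cref{Lemma: DMN4.2.2} with $t$ replaced by $t-1$, yielding
\begin{align*}
Z_\alpha(v_{r,t},q) = 2t\,G_{r+1}(q)\left(\sum_{k\geq 0}\binom{t-1}{2k}\frac{(2k)!}{k!}\langle a,\alpha\rangle^{t-1-2k}G_2(q)^k\right)q^{\frac{1}{2}\langle\alpha,\alpha\rangle}/\eta(q)^d.
\end{align*}

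Finally I would sum over the lattice. By \cref{cosetcharactersum} with the trivial coset (i.e. summing $Z_\alpha$ over $\alpha\in\Lambda$), the $q^{\frac12\langle\alpha,\alpha\rangle}$ and $\langle a,\alpha\rangle^{t-1-2k}$ factors combine, for each fixed $k$, into exactly $\Theta_{\Lambda,t-2k-1}(q)=\sum_{\alpha\in\Lambda}\langle a,\alpha\rangle^{t-2k-1}q^{\frac12\langle\alpha,\alpha\rangle}$. Since $G_{r+1}$, $G_2$ and the binomial coefficients are independent of $\alpha$, they pull outside the lattice sum, and interchanging the finite-support (in each degree) $k$-sum with the lattice sum produces precisely the claimed expression. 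The main obstacle I anticipate is justifying the collapse of the recursion at the second step: one must verify carefully that $a[1]$ is the only square-bracket mode contributing (the higher $a[m]$ with $m\geq 2$ annihilate $a[-1]^t\mathbf{1}$ because the square-bracket modes inherit the Heisenberg commutation relations and $a[-1]^t\mathbf{1}$ has bounded square-bracket degree), and that $a[1]a[-1]^t\mathbf{1}=t\,a[-1]^{t-1}\mathbf{1}$ exactly, with no correction terms surviving the zero-mode trace — this is where the normalization $\langle a,a\rangle=1$ and the vanishing of $a(0)$ on $S_{\alg}\otimes e^0$ do the essential work.
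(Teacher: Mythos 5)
Your proof is correct and follows essentially the same route as the paper's: one application of \cref{zhurecursion} (with the $\delta_{r,1}$ term vanishing since $r\geq 3$), collapse of the sum to the $m=1$ term via $a[1]a[-1]^{t}\textbf{1}=t\,a[-1]^{t-1}\textbf{1}$, substitution of \Cref{Lemma: DMN4.2.2}, and summation over the lattice. The only cosmetic difference is that the paper first computes $Z_{i}(v_{r,t},q)$ for an arbitrary coset $\Lambda+\lambda_{i}$ and then specializes to $i=0$, whereas you sum directly over $\Lambda$; your explicit justification of the collapse via the square-bracket Heisenberg commutation relations is a detail the paper leaves implicit.
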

\begin{proof}
We use \cref{zhurecursion}. This gives
\begin{align*}
Z_{\alpha}(v_{r,t},q) &= (r-1)!\sum_{m \geq 1} \frac{2(-1)^{m+1}}{m!(r-1)!}G_{r+m}(q)Z_{\alpha}(a[m]a[-1]^{t}\textbf{1},q) \\
&= 2G_{r+1}(q)Z_{\alpha}(a[1]a[-1]^{t}\textbf{1},q) \\
&= 2tG_{r+1}(q)f_{\alpha, t-1}(q)q^{\frac{1}{2} \langle \alpha, \alpha \rangle} / \eta(q)^{d}.
\end{align*}
where we have made use of \Cref{Lemma: DMN4.2.2} in the last equality. Using \cref{cosetcharactersum},
\begin{align*}
Z_{i}(v_{r,t},q) &= 2tG_{r+1}(q) \left( \sum_{\alpha \in \Lambda + \lambda_{i}} f_{\alpha, t-1}(q)q^{\frac{1}{2} \langle \alpha, \alpha \rangle} \right) / \eta(q)^{d} \\
&= 2tG_{r+1}(q) \left( \sum_{k \geq 0} \binom{t-1}{2k} \frac{(2k)!}{k!} G_{2}(q)^{k} \sum_{\alpha \in \Lambda + \lambda_{i}} \langle a, \alpha \rangle^{t-2k-1} q^{\frac{1}{2}\langle \alpha, \alpha \rangle} \right) / \eta(q)^{d} \\
&= 2tG_{r+1}(q) \left( \sum_{k \geq 0} \binom{t-1}{2k} \frac{(2k)!}{k!} G_{2}(q)^{k} \Theta_{\Lambda + \lambda_{i}, t-2k-1}(q) \right) / \eta (q)^{d}.
\end{align*}
The proof follows from the fact that specializing $i=0$ gives $Z_{0}(v_{r,t},q) = Z(v_{r,t},q)$.
\end{proof}

The theta series $\Theta_{\Lambda, t,k}(q)$ in the statement of \Cref{Prop: specificcharactermap2} appears troublesome since it is not quite the regular theta series $\Theta_{\Lambda}(q)$ of $\Lambda$. It was shown in \cite{DM} and exposed in Theorem 4.2.6 of \cite{DMN} that $\Theta_{\Lambda, t,k}(q)$ in fact lies in the space of quasi-modular forms on the congruence subgroup $\Gamma_{0}(N)$ for some $N \geq 1$. Hence the image of the character map for $V_{\Lambda}$ is indeed quasi-modular, and we may consider $p$-adic limits. \par 
\vspace{10pt}
With the assumptions of \Cref{Prop: specificcharactermap2} in place, we proceed as in \Cref{Subsection: ProofofTheorem1.1} by denoting 
\begin{align*}
u_{r,t} = (1-p^{r})v_{r,t} = (1-p^{r})(r-1)!a[-r]a[-1]^{t}\textbf{1}
\end{align*}
and considering the sequence of states $\left( u_{p^{b}(p-1)+l,t} \right)_{b \geq 0}$ for fixed odd $t \geq 1$ where $p \geq 3$ is a prime and $l \not\equiv -1 \mod p-1$. Since the round-bracket states obtained in \Cref{Prop: roundbracketlatticestates} are identical to those in the Heisenberg case, both \Cref{Lemma: stirlingconvergence,Lemma: bernoulliconvergence} establish the $p$-adic convergence of this sequence to a state $u_{t,l}$ in the $p$-adic lattice VOA $\widehat{V}_{\Lambda}$. Then by taking the limit in the $p$-adic topology as in \Cref{Subsection: ProofofTheorem1.1} we obtain
\begin{align*}
\textbf{\textit{f}}(u_{t,l}) = 2tG_{l+1}^{\star}(q) \left( \sum_{k \geq 0} \binom{t-1}{2k} \frac{(2k)!}{k!} G_{2}(q)^{k} \Theta_{\Lambda, t-2k-1}(q) \right).
\end{align*}
where $\textbf{\textit{f}}$ is the character map in \cref{padiccharactermaplattice}. This proves \Cref{Th: theorem2}. \par

\bibliographystyle{amsalpha}
\bibliography{references}

\end{document}